\documentclass[10pt,reqno]{amsart}

\def\showfigures{1}

\usepackage{amsmath,amssymb,amsfonts,dsfont}
\usepackage{cite,graphicx,xcolor,hyperref} 

\textheight     = 230mm
\textwidth      = 165mm
\oddsidemargin  = 2mm
\evensidemargin = 2mm
\topmargin      = -6mm

\sloppy

\newcommand{\trans}{{\scriptscriptstyle \mathrm{T}}}

\newcommand{\mZi}{\phantom{-}0\phantom{_1}}

\theoremstyle{plain}
  \newtheorem{Proposition}{Proposition}
  \newtheorem{Theorem}{Theorem}
\theoremstyle{remark}
  \newtheorem{Remark}{Remark}
  \newtheorem{Example}{Example}

\author{K.\,A. Rybakov}

\title[Forming invariant stochastic differential systems with a given first integral]{Forming Invariant Stochastic Differential Systems \\ with a Given First Integral}

\begin{document}

\maketitle

%
%
%

\textbf{Abstract.} This article proposes a method for forming invariant stochastic differential systems, namely dynamic systems with trajectories belonging to a given smooth manifold. The It\^o or Stratonovich stochastic differential equations with the Wiener component describe dynamic systems, and the manifold is implicitly defined by a differentiable function. A convenient implementation of the algorithm for forming invariant stochastic differential systems within symbolic computation environments characterizes the proposed method. It is based on determining a basis associated with a tangent hyperplane to the manifold. The article discusses the problem of basis degeneration and examines variants that allow for the simple construction of a basis that does not degenerate. Examples of invariant stochastic differential systems are given, and numerical simulations are performed for them.

\vskip 0.5ex

\textbf{Keywords:} differential systems, invariant systems, stochastic systems, inverse dynamics problem, manifold, first integral, stochastic differential equations

\vskip 0.5ex

\textbf{MSC:} 58J65, 60H10


\makeatletter{\renewcommand*{\@makefnmark}{}
\footnotetext{Email: rkoffice@mail.ru}
\footnotetext{Citation: Rybakov, K. Forming invariant stochastic differential systems with a given first integral. {\em Dynamics} {\bf 2026}, {\em 6(1)},~6. \url{https://doi.org/10.3390/dynamics6010006}}\makeatother}

\section{Introduction}\label{secIntro}

This article considers the inverse dynamics problem, which involves forming dynamic systems with trajectories belonging to a given smooth manifold. Dynamic systems are assumed to be described by It\^o or Stratonovich stochastic differential equations. Hereinafter, they are called invariant stochastic differential systems, and their trajectories are those of diffusion processes. The manifold is implicitly defined by a differentiable function that is a first integral of the system.

In the following, we give examples of invariant stochastic differential systems. The first example concerns the problem of spatial orientation control for a manned or unmanned aerial vehicle~\cite{CarAbaGar_IJC22, UnlGul_JA25}. Consider the system of linear ordinary differential equations describing the rotation of a rigid body in three-dimensional space~\cite{SapMol_AT21, Lev_TSU23}:
\begin{equation}\label{eqSolid1}
  \begin{aligned}
    \dot\lambda_0(t) & = \tfrac{1}{2} \bigl( -\lambda_1(t) \omega_1(t) - \lambda_2(t) \omega_2(t) - \lambda_3(t) \omega_3(t) \bigr), \\
    \dot\lambda_1(t) & = \tfrac{1}{2} \bigl( \lambda_0(t) \omega_1(t) - \lambda_3(t) \omega_2(t) + \lambda_2(t) \omega_3(t) \bigr), \\
    \dot\lambda_2(t) & = \tfrac{1}{2} \bigl( \lambda_3(t) \omega_1(t) + \lambda_0(t) \omega_2(t) - \lambda_1(t) \omega_3(t) \bigr), \\
    \dot\lambda_3(t) & = \tfrac{1}{2} \bigl( -\lambda_2(t) \omega_1(t) + \lambda_1(t) \omega_2(t) + \lambda_0(t) \omega_3(t) \bigr),
  \end{aligned}
\end{equation}
where $t \in \mathds{T}$, $\mathds{T} = [0,T]$ is a given time interval of rotation; $\lambda(t) = [ \, \lambda_0(t) ~ \lambda_1(t) ~ \lambda_2(t) ~ \lambda_3(t) \, ]^\trans$ is the quaternion of rotation; $\omega(t) = [\, \omega_1(t) ~ \omega_2(t) ~ \omega_3(t) \, ]^\trans$ is the angular velocity that can be treated as a control input; $[\,\cdot\,]^\trans$ means transposition.

It is not difficult to see that
\begin{align*}
  & \frac{d}{dt} \bigl( \lambda_0^2(t) + \lambda_1^2(t) + \lambda_2^2(t) + \lambda_3^2(t) \bigr) \\
  & \ \ \ = 2 \bigl( \lambda_0(t) \dot\lambda_0(t) + \lambda_1(t) \dot\lambda_1(t) + \lambda_2(t) \dot\lambda_2(t) + \lambda_3(t) \dot\lambda_3(t) \bigr) \\
  & \ \ \ = -\lambda_0(t) \lambda_1(t) \omega_1(t) - \lambda_0(t) \lambda_2(t) \omega_2(t) - \lambda_0(t) \lambda_3(t) \omega_3(t) \\
  & \ \ \ \ \ \ {} + \lambda_0(t) \lambda_1(t) \omega_1(t) - \lambda_1(t) \lambda_3(t) \omega_2(t) + \lambda_1(t) \lambda_2(t) \omega_3(t) \\
  & \ \ \ \ \ \ {} + \lambda_2(t) \lambda_3(t) \omega_1(t) + \lambda_0(t) \lambda_2(t) \omega_2(t) - \lambda_1(t) \lambda_2(t) \omega_3(t) \\
  & \ \ \ \ \ \ {} - \lambda_2(t) \lambda_3(t) \omega_1(t) + \lambda_1(t) \lambda_3(t) \omega_2(t) + \lambda_0(t) \lambda_3(t) \omega_3(t) = 0.
\end{align*}
Thus, the modulus of the quaternion $\lambda(t)$ is equal to one for the condition $|\lambda(0)|^2 = \lambda_0^2(0) + \lambda_1^2(0) + \lambda_2^2(0) + \lambda_3^2(0) = 1$, and for the system of differential equations under consideration, the following identity holds:
\begin{equation}\label{eqQuaternion}
  |\lambda(t)|^2 = \lambda_0^2(t) + \lambda_1^2(t) + \lambda_2^2(t) + \lambda_3^2(t) = 1,
\end{equation}
i.e., the system state belongs to a three-dimensional hypersphere centered at the origin and with unit radius (in four-dimensional space).

By assuming that the system is affected by random disturbances, which lead to inaccurate control implementation, we obtain the following system of linear stochastic differential equations with multiplicative noise:
\begin{equation}\label{eqSolid2}
  \begin{aligned}
    \dot\lambda_0(t) & = \tfrac{1}{2} \bigl( -\lambda_1(t) \bigl[ \omega_1(t) + \sigma_1 V_1(t) \bigr] - \lambda_2(t) \bigl[ \omega_2(t) + \sigma_2 V_2(t) \bigr] - \lambda_3(t) \bigl[ \omega_3(t) + \sigma_3 V_3(t) \bigr] \bigr), \\
    \dot\lambda_1(t) & = \tfrac{1}{2} \bigl( \lambda_0(t) \bigl[ \omega_1(t) + \sigma_1 V_1(t) \bigr] - \lambda_3(t) \bigl[ \omega_2(t) + \sigma_2 V_2(t) \bigr] + \lambda_2(t) \bigl[ \omega_3(t) + \sigma_3 V_3(t) \bigr] \bigr), \\
    \dot\lambda_2(t) & = \tfrac{1}{2} \bigl( \lambda_3(t) \bigl[ \omega_1(t) + \sigma_1 V_1(t) \bigr] + \lambda_0(t) \bigl[ \omega_2(t) + \sigma_2 V_2(t) \bigr] - \lambda_1(t) \bigl[ \omega_3(t) + \sigma_3 V_3(t) \bigr] \bigr), \\
    \dot\lambda_3(t) & = \tfrac{1}{2} \bigl( -\lambda_2(t) \bigl[ \omega_1(t) + \sigma_1 V_1(t) \bigr] + \lambda_1(t) \bigl[ \omega_2(t) + \sigma_2 V_2(t) \bigr] + \lambda_0(t) \bigl[ \omega_3(t) + \sigma_3 V_3(t) \bigr] \bigr),
  \end{aligned}
\end{equation}
where $V(t) = [ \, V_1(t) ~ V_2(t) ~ V_3(t) \, ]^\trans$ is the vector Gaussian white noise, and $\sigma = [ \, \sigma_1 ~ \sigma_2 ~ \sigma_3 \, ]^\trans$ is the vector of intensities of random disturbances. Random disturbances in spatial orientation control are typical, and their source is usually found in the measurement system (gyroscopes and accelerometers). In a control system, disturbances are typically generated by a shaping filter based on Gaussian white noise. In this example, a direct effect of Gaussian white noise is assumed to simplify the mathematical model.

The equations~\eqref{eqSolid2} should be understood as Stratonovich stochastic differential equations~\cite{Oks_13}, then the system state belongs to the same three-dimensional hypersphere.

The second example is of great importance for numerical methods to solve stochastic differential equations with multiplicative noise~\cite{KloPla_92, ArtAve_97, MilTre_04, Kuz_DUPU23}, including equations~\eqref{eqSolid2}. Consider the system of It\^o stochastic differential equations:
\begin{equation}\label{eqMil}
  \begin{aligned}
    dX_1(t) & = dW_1(t), & X_1(0) & = 0, \\
    dX_2(t) & = X_3(t) dW_1(t), & X_2(0) & = 0, \\
    dX_3(t) & = dW_2(t), & X_3(0) & = 0, \\
    dX_4(t) & = X_1(t) dW_2(t), & X_4(0) & = 0,
  \end{aligned}
\end{equation}
where $t \in \mathds{T}$, $\mathds{T} = [0,T]$ is a given time interval; $W(t) = [ \, W_1(t) ~ W_2(t) \, ]^\trans $ denotes the standard vector Wiener process, i.e., $W_1(t)$ and $W_2(t)$ are independent standard Wiener processes.

Two equations from the system of equations~\eqref{eqMil} have obvious solutions
\[
  X_1(t) = \int_0^t dW_1(\tau) = W_1(t), \ \ \ X_3(t) = \int_0^t dW_2(\tau) = W_2(t),
\]
and for the remaining equations, solutions are written as follows:
\[
  X_2(t) = \int_0^t X_3(\tau) dW_1(\tau) = \int_0^t W_2(\tau) dW_1(\tau), \ \ \ X_4(t) = \int_0^t X_1(\tau) dW_2(\tau) = \int_0^t W_1(\tau) dW_2(\tau).
\]

For a fixed $t$, the random variables $X_1(t),X_2(t),X_3(t),X_4(t)$ are used in numerical methods for solving stochastic differential equations, e.g., in the Milstein method~\cite{MilTre_04} or in one of the Rosenbrock-type methods~\cite{AveRyb_SJVM24}. These random variables are essential in numerical methods that exhibit high convergence orders, provided that convergence is understood in a strong sense~\cite{KloPla_92, Kuz_DUPU23, Ryb_Springer22}.

The random variables $X_2(t)$ and $X_4(t)$ are called iterated stochastic integrals of the second multiplicity. They can be represented as multiple stochastic integrals ($t$ is not necessarily fixed):
\begin{align*}
  X_2(t) & = \int_0^t \int_0^\tau {dW_2(\theta) dW_1(\tau)} = \int_0^t \int_0^t {\mathbf{1}(\tau-\theta) dW_2(\theta) dW_1(\tau)}, \\
  X_4(t) & = \int_0^t \int_0^\tau {dW_1(\theta) dW_2(\tau)} = \int_0^t \int_0^t {\mathbf{1}(\tau-\theta) dW_1(\theta) dW_2(\tau)},
\end{align*}
where $\mathbf{1}(t)$ is the unit step function:
\[
  \mathbf{1}(t) = \left\{ \begin{aligned}
    & 1 & & \text{for} ~ t > 0, \\
    & 0 & & \text{for} ~ t \leqslant 0.
  \end{aligned} \right.
\]

Then
\begin{align*}
  X_2(t) + X_4(t) & = \int_0^t \int_0^t {\mathbf{1}(\tau-\theta) dW_2(\theta) dW_1(\tau)} + \int_0^t \int_0^t {\mathbf{1}(\tau-\theta) dW_1(\theta) dW_2(\tau)} \\
  & = \int_0^t \int_0^t {\mathbf{1}(\theta-\tau) dW_1(\theta) dW_2(\tau)} + \int_0^t \int_0^t {\mathbf{1}(\tau-\theta) dW_1(\theta) dW_2(\tau)} \\
  & = \int_0^t \int_0^t {\bigl( \mathbf{1}(\theta-\tau) + \mathbf{1}(\tau-\theta) \bigr) dW_1(\theta) dW_2(\tau)} \\
  & = \int_0^t \int_0^t {dW_1(\theta) dW_2(\tau)} = \int_0^t dW_1(\tau) \int_0^t dW_2(\tau),
\end{align*}
i.e., we have the identity
\[
  X_2(t) + X_4(t) = X_1(t) X_3(t), \ \ \ \text{or} \ \ \ X_2(t) + X_4(t) - X_1(t) X_3(t) = 0,
\]
and therefore, the system state belongs to a hypercylinder over a hyperbolic paraboloid (in four-dimensional space).

The theory of invariant stochastic differential systems began to develop actively in the late 1970s~\cite{Dub_78, KryRoz_UMN82}. It generalizes the theory of invariant deterministic differential systems~\cite{Eru_PMM52, Muh_DU69, Muh_DU71, Gal_86, Gal_16} that is utilized, e.g., in the design of aircraft control systems~\cite{GalZar_87, Khru_87, Gor_VMAI12}.

One of the problems in the theory of invariant stochastic differential systems surrounds obtaining stochastic differential equations for a given manifold. This problem is reduced to finding and then applying conditions for coefficients of such equations~\cite{Dub_89, Kar_15, Tle_DU14}. For each manifold, there are infinitely many invariant differential systems, among which we can distinguish systems with additional properties, e.g., stability~\cite{TleVas_OM21}. As for deterministic differential systems, we can consider the synthesis of control for stochastic differential systems that ensures invariance~\cite{Kar_15, Kar_21}. Among the applied problems, we can note the epidemic spread analysis~\cite{Kar_21}, the ecosystem evolution~\cite{Kar_21}, financial mathematics problems~\cite{KarPet_NEFU18}, etc.

Conditions on coefficients of equations that ensure the existence of a first integral are usually written using a vector product in the space whose dimension coincides with the order of dynamic system or is greater by one~\cite{Dub_89, Kar_15, Kar_21}. The set of coefficients of stochastic differential equations is formed using determinants of functional matrices. Some elements of such matrices depend on a given manifold (these elements can be assumed to be known), while others are chosen arbitrarily under the additional condition of existence of solutions to stochastic differential equations. The described approach allows one to find the entire set of invariant stochastic differential systems corresponding to a given first integral. However, it is characterized by both the complexity of expressions describing coefficients of stochastic differential equations (this is especially evident with increasing the order of dynamic system) and the redundancy of functions that can be chosen arbitrarily (some functions are included in coefficients of equations nonlinearly).

The purpose of this study is to describe and examine a method for obtaining stochastic differential equations for a given manifold. The proposed method has the following advantages:

(1)\;The method provides simple expressions for coefficients of stochastic differential equations.

(2)\;The method ensures a minimum number of functions required to determine the entire set of invariant stochastic differential systems associated with a given first integral (coefficients of equations depend on these functions linearly).

(3)\;The method allows one to obtain stochastic differential equations with a degenerate diffusion matrix relative to a part of the state components.

The proposed method is distinguished by a convenient implementation of the corresponding algorithm for forming invariant stochastic differential systems within symbolic computation environments. The method utilizes the construction of a basis related to a tangent hyperplane to the manifold. The article discusses the problem of basis degeneration and examines variants that ensure the simple construction of a basis that does not degenerate.

The results of this study can be applied to synthesize the control that guarantees invariance~\cite{Kar_15, Kar_21} and obtain stochastic differential equations to validate numerical methods primarily focused on systems of stochastic differential equations with first integrals~\cite{AveRyb_SJVM19, ArmKing_PRSA22, BurBurLyt_NA22, SchHerStuWar_FCM25}. If a first integral exists for a system of stochastic differential equations, but an analytical solution cannot be obtained, such a first integral can be utilized to estimate the accuracy of numerical methods whose convergence is understood in a strong sense. Furthermore, the proposed method ensures a simple transition from an invariant deterministic differential system to a stochastic one while preserving the first integral.

In addition to this Introduction, the article contains several sections. Section~\ref{secInvariant} describes invariant stochastic differential systems and specifies invariance conditions. The proposed method for forming invariant stochastic differential systems is presented in Section~\ref{secMethod}. Additionally, Section~\ref{secDim248} considers the second-, fourth-, and eighth-order systems. Section~\ref{secNumerical} includes examples of invariant stochastic differential systems and the results of numerical simulations for them. Section~\ref{secConcl} presents the conclusions of the article.

\section{Invariant Stochastic Differential Systems}\label{secInvariant}

The term ``invariant stochastic differential system'' refers to a dynamic system whose mathematical model is represented by the It\^o stochastic differential equation:
\begin{equation}\label{eqIto}
  dX(t) = f \bigl( t,X(t) \bigr) dt + \sigma \bigl( t,X(t) \bigr) dW(t), \ \ \ X(t_0) = x_0,
\end{equation}
provided that the solution to this equation $X(t)$ almost surely (with probability 1) satisfies the relation
\begin{equation}\label{eqInvariant}
  M \bigl(t,X(t) \bigr) = M(t_0,x_0) = \mathrm{const}.
\end{equation}

We assume that the vector $X(t) = [ \, X_1(t) ~ \dots ~ X_n(t) \, ]^\trans \in \mathds{R}^n$ describes the state of dynamic system, $n \geqslant 2$; $t \in \mathds{T} = [t_0,T]$ denotes time; the moments $t_0$ and $T$ are given; $f(t,x) \colon \mathds{T} \times \mathds{R}^n \to \mathds{R}^n$ and $\sigma(t,x) \colon \mathds{T} \times \mathds{R}^n \to \mathds{R}^{n \times s}$ are vector and matrix functions, respectively; and $W(t) = [ \, W_1(t) ~ \dots ~ W_s(t) \, ]^\trans$ represents the standard vector Wiener process. The Wiener process $W(t)$, which models disturbances that act on dynamic system, and the initial state $x_0 \in \mathds{R}^n$ are independent.

Components $f_i(t,x)$ of the vector function $f(t,x)$ and elements $\sigma_{il}(t,x)$ of the matrix function $\sigma(t,x)$ satisfy conditions for the existence and uniqueness of solutions to stochastic differential equations, $i = 1,\dots,n$ and $l = 1,\dots,s$. This means the Lipschitz condition and the linear growth condition with respect to $x$~\cite{Oks_13, KloPla_92}, namely
\[
  |f(t,x) - f(t,y)| + \|\sigma(t,x) - \sigma(t,y)\| \leqslant c \, |x - y| \ \ \ \forall t \in \mathds{T} \ \ \ \forall x,y \in \mathds{R}^n
\]
and
\[
  |f(t,x)| + \|\sigma(t,x)\| \leqslant c \, (1 + |x|) \ \ \ \forall t \in \mathds{T} \ \ \ \forall x \in \mathds{R}^n,
\]
where $c > 0$ is a constant, and $|\cdot|$ and $\|\cdot\|$ are the vector modulus and the Frobenius norm of the matrix, respectively. Note that the above conditions can be weakened~\cite{Khas_11, AnuVer_98}. For instance, for many problems, it is sufficient to satisfy mentioned conditions locally rather than globally. The additional condition is the continuous differentiability of functions $\sigma_{il}(t,x)$ with respect to $x$.

The nonconstant function $M(t,x) \colon \mathds{T} \times \mathds{R}^n \to \mathds{R}$ is continuously differentiable with respect to $t$ and twice continuously differentiable with respect to $x$. According to~\cite{Dub_78}, such a function is called the first integral for the equation~\eqref{eqIto}. Another definition of the first integral is formulated in~\cite{KryRoz_UMN82}.

Necessary and sufficient conditions for the function $M(t,x)$ to be a first integral of the equation~\eqref{eqIto} are written in the following way~\cite{Dub_89}:
\begin{gather}
  \sum\limits_{i=1}^n \sigma_{il}(t,x) \, \frac{\partial M(t,x)}{\partial x_i} = 0, \ \ \ l = 1,\dots,s, \label{eqCondition1} \\
  \frac{\partial M(t,x)}{\partial t} + \sum\limits_{i=1}^n \biggl[ f_i(t,x) - \frac{1}{2} \sum\limits_{j=1}^n \sum\limits_{l=1}^s \frac{\partial \sigma_{il}(t,x)}{\partial x_j} \, \sigma_{jl}(t,x) \biggr] \frac{\partial M(t,x)}{\partial x_i} = 0, \label{eqCondition2}
\end{gather}
and these equalities should be satisfied on trajectories of the random process $X(t)$.

The invariant stochastic differential system can be defined by the equivalent Stratonovich stochastic differential equation:
\begin{equation}\label{eqStr}
  dX(t) = a \bigl( t,X(t) \bigr) dt + \sigma \bigl( t,X(t) \bigr) \circ dW(t), \ \ \ X(t_0) = x_0,
\end{equation}
for which, in addition to the previously introduced notations, $a(t,x) \colon \mathds{T} \times \mathds{R}^n \to \mathds{R}^n$ is the vector function. Taking into account the well-known equation that relates drift coefficients in equations~\eqref{eqIto} and~\eqref{eqStr}~\cite{KloPla_92}, we can rewrite the equality~\eqref{eqCondition2} as follows:
\begin{equation}\label{eqCondition3}
  \frac{\partial M(t,x)}{\partial t} + \sum\limits_{i=1}^n a_i(t,x) \, \frac{\partial M(t,x)}{\partial x_i} = 0,
\end{equation}
where conditions on components $a_i(t,x)$ of the vector function $a(t,x)$ are determined through conditions on functions $f_i(t,x)$ and $\sigma_{il}(t,x)$, $i = 1,\dots,n$ and $l = 1,\dots,s$.

It is not difficult to see that the equality of the It\^o differential for the random process $M(t,X(t))$ to zero is equivalent to relations~\eqref{eqCondition1} and~\eqref{eqCondition2}. Similarly, the equality of the Stratonovich differential for the same random process to zero is equivalent to relations~\eqref{eqCondition1} and~\eqref{eqCondition3}~\cite{Dub_12, Kar_14}.

The equation~\eqref{eqInvariant} defines a smooth manifold in $\mathds{T} \times \mathds{R}^n$, and trajectories of the random process $X(t)$ with the initial condition $X(t_0) = x_0$ belong to this manifold. If $M(t,x) \neq M(x)$, then the term ``dynamic manifold'' may be used.

Necessary and sufficient conditions for the function $M(t,x)$ to be a first integral of the equation~\eqref{eqStr} have a simple and clear geometric meaning. The equality~\eqref{eqCondition1} is the condition that each column of the matrix $\sigma(t,x)$ and the gradient $\nabla_x M(t,x)$ are orthogonal in $\mathds{R}^n$ $\forall t \in \mathds{T}$. For $M(t,x) = M(x)$, the equality~\eqref{eqCondition3} is the orthogonality condition for the vector $a(t,x)$ and the gradient $\nabla_x M(t,x) = \nabla M(x)$ in $\mathds{R}^n$ $\forall t \in \mathds{T}$, and for $M(t,x) \neq M(x)$, it is the orthogonality condition for the vector $\tilde a(t,x) = [ \, 1 ~ a^\trans(t,x) \, ]^\trans$ and the generalized gradient $\nabla_{t,x} M(t,x)$.

\begin{Remark}\label{rem1}
Using the notation $\sigma_{*l}(t,x)$ for the $l$th column of the matrix $\sigma(t,x)$ and the notation $(\cdot,\cdot)$ for the inner product in $\mathds{R}^n$, equalities~\eqref{eqCondition1},~\eqref{eqCondition2}, and~\eqref{eqCondition3} can be rewritten as
\begin{gather*}
  \bigl( \sigma_{*l}(t,x), \nabla_x M(t,x) \bigr) = 0, \ \ \ l = 1,\dots,s, \\
  \frac{\partial M(t,x)}{\partial t} + \bigl( f(t,x) - \Sigma(t,x), \nabla_x M(t,x) \bigr) = 0, \ \ \ \frac{\partial M(t,x)}{\partial t} + \bigl( a(t,x), \nabla_x M(t,x) \bigr) = 0,
\end{gather*}
where
\begin{equation}\label{eqDefSigma}
  \Sigma(t,x) = \frac{1}{2} \sum_{l = 1}^s {\frac{\partial \sigma_{*l}(t,x)}{\partial x} \, \sigma_{*l}(t,x)}.
\end{equation}

For a nonautonomous dynamic system, we can convert it to an autonomous one using the additional equation $dX_0(t) = 1$ with the solution $X_0(t) = t$ by introducing the extended state $\tilde X(t) = [ \, X_0(t) ~ X^\trans(t) \, ]^\trans \in \mathds{R}^{n+1}$ (its components are numbered from zero). Then equalities similar to~\eqref{eqCondition2} and~\eqref{eqCondition3} will have a simpler form.
\end{Remark}

\section{Forming Invariant Stochastic Differential Systems}\label{secMethod}

In this section, we define the set of $(n-1)$ linearly independent vectors orthogonal to the gradient $\nabla_x M(t,x)$. To simplify notations, we introduce the vector $G$:
\begin{equation}\label{eqDefG}
  G = [ \, g_1 ~ g_2 ~ \dots ~ g_n \, ]^\trans, \ \ \ g_i = \frac{\partial M(t,x)}{\partial x_i}, \ \ \ i = 1,\dots,n.
\end{equation}

Next, we define vectors $N_1,\dots,N_{n-1}$ as follows:
\begin{equation}\label{eqBasisX}
  \begin{aligned}
    N_1 & = [ \, g_2 ~ {-g_1} \ \ 0 ~ \dots ~ 0 \ \ 0 \, ]^\trans, \\
    N_2 & = [ \, 0 \ \ g_3 ~ {-g_2} ~ \dots ~ 0 \ \ 0 \, ]^\trans, \\
    & \hskip 4em \dots \\
    N_{n-2} & = [ \, 0 \ \ 0 ~ \dots ~ g_{n-1} ~ {-g_{n-2}} \ \ 0 \, ]^\trans, \\
    N_{n-1} & = [ \, 0 \ \ 0 ~ \dots ~ \hskip 0.52em 0 \hskip 0.52em \ \ g_n ~ {-g_{n-1}} \, ]^\trans,
  \end{aligned}
\end{equation}
or, in general, $N_j = g_{j+1} E_j - g_j E_{j+1}$, $j = 1,\dots,n-1$, where $E_1,\dots,E_n$ are columns of the identity matrix~$E$ of size $n \times n$. Vectors $G,N_1,\dots,N_{n-1}$ are functions of a pair $(t,x) \in \mathds{T} \times \mathds{R}^n$ with values in $\mathds{R}^n$. The arguments of these vector functions are not given for brevity.

By definition, vectors $N_1,\dots,N_{n-1}$ are orthogonal to vector $G$. Furthermore, vectors $N_j$ and $N_k$ are orthogonal if $|j - k| > 1$, i.e., the corresponding Gram matrix for the set of vectors $G,N_1,\dots,N_{n-1}$ is tridiagonal in the general case.

Indeed, let $j + 1 < k$. Then, the vector $N_j$ can only have nonzero components with indices $j$ and $j + 1$, while the vector $N_k$ can only have nonzero components with indices $k$ and $k + 1$. Consequently, the inner product of vectors $N_j$ and $N_k$ is equal to zero since $j < j + 1 < k < k + 1$. The same result holds if $k + 1 < j$.

\begin{Proposition}\label{prop1}
If $g_2 \neq 0$ and also $g_3 \neq 0$, \dots, $g_{n-1} \neq 0$ for $n > 3$, then vectors $G,N_1,\dots,N_{n-1}$ are linearly independent, and the determinant of the matrix formed by these vectors is equal to $(-1)^{n-1} |G|^2 \pi_n$, where
\begin{equation}\label{eqDefPi}
  \pi_n = \left\{ \begin{aligned}
    & 1 & & \text{for} ~ n = 2, \\
    & g_2 g_3 \dots g_{n-1} & & \text{for} ~ n > 2.
  \end{aligned} \right.
\end{equation}
\end{Proposition}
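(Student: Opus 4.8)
The plan is to reduce the statement to a single determinant computation and to carry it out by induction on $n$. Writing $M_n$ for the $n \times n$ matrix whose columns are $G, N_1, \dots, N_{n-1}$, the vectors are linearly independent precisely when $\det M_n \neq 0$, so it suffices to establish the closed form $\det M_n = (-1)^{n-1} |G|^2 \pi_n$ as an algebraic identity in $g_1, \dots, g_n$ and then read off nonvanishing from the hypotheses. I would set $D_n = \det M_n$, regarded as a polynomial in $g_1, \dots, g_n$, and prove $D_n = (-1)^{n-1} (g_1^2 + \dots + g_n^2)\, g_2 g_3 \cdots g_{n-1}$, the product being empty, hence $1$, for $n = 2$.

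For the inductive step I would expand $D_n$ along the last row, which by the definition of $N_{n-1}$ has the form $(g_n, 0, \dots, 0, -g_{n-1})$, with its only nonzero entries at positions $(n,1)$ and $(n,n)$. The $(n,n)$-minor is obtained by deleting the last row and column; since $N_1, \dots, N_{n-2}$ are supported in the first $n-1$ coordinates, this minor is exactly the matrix of the same shape one size down, so it equals $D_{n-1}$. The $(n,1)$-minor is the key computation: after deleting the first column one is left with the $n \times (n-1)$ array $[\,N_1 ~ \cdots ~ N_{n-1}\,]$, and deleting the last row makes it square; because column $c$ (that is, $N_c$) contributes $g_{c+1}$ in row $c$ and $-g_c$ in row $c+1$, every nonzero entry lies on or below the diagonal, so this submatrix is lower triangular with diagonal $g_2, g_3, \dots, g_n$ and determinant $g_2 g_3 \cdots g_n$. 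The cofactor signs then give the recursion
\[
  D_n = (-1)^{n+1} g_n (g_2 g_3 \cdots g_n) - g_{n-1} D_{n-1}.
\]
Substituting the inductive hypothesis for $D_{n-1}$, using $(-1)^{n+1} = -(-1)^{n-2} = (-1)^{n-1}$, and factoring out $(-1)^{n-1} g_2 g_3 \cdots g_{n-1}$, the bracket collapses to $g_1^2 + \dots + g_{n-1}^2 + g_n^2 = |G|^2$, which is exactly the claimed formula. The base case $n = 2$ is the direct evaluation of the determinant of the matrix with columns $G = [\,g_1~g_2\,]^\trans$ and $N_1 = [\,g_2~{-g_1}\,]^\trans$, namely $g_1(-g_1) - g_2 g_2 = -(g_1^2 + g_2^2) = -|G|^2$, matching $(-1)^{1}|G|^2 \pi_2$.

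Finally, linear independence follows from the identity: under the hypotheses $g_2, \dots, g_{n-1} \neq 0$ one has $\pi_n \neq 0$, and moreover $|G| \neq 0$ (for $n \geq 3$ already because $|G|^2 \geq g_2^2 > 0$; for $n = 2$ this is the standing assumption that the gradient $G = \nabla_x M$ does not vanish), so $\det M_n = (-1)^{n-1} |G|^2 \pi_n \neq 0$ and the columns $G, N_1, \dots, N_{n-1}$ form a basis of $\mathds{R}^n$. I expect the only delicate point to be the identification of the two minors together with the accompanying sign bookkeeping; in particular, recognizing that deleting the first column and the last row yields a triangular matrix is what makes the recursion clean. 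An alternative route through the Gram matrix of $G, N_1, \dots, N_{n-1}$ — which is block diagonal with blocks $[\,|G|^2\,]$ and an $(n-1)\times(n-1)$ tridiagonal matrix — would give $\det M_n$ only up to sign and still require a tridiagonal determinant recursion, so the direct cofactor induction seems preferable.
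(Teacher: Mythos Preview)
Your proof is correct and follows essentially the same route as the paper: Laplace expansion along the last row, identification of the $(n,1)$-minor as lower triangular with diagonal $g_2,\dots,g_n$ and of the $(n,n)$-minor as $D_{n-1}$, then induction from the base $D_2 = -|G|^2$. One tiny remark: for $n=2$ the proposition's hypothesis already includes $g_2 \neq 0$, so $|G|^2 \geq g_2^2 > 0$ holds there too and you need not invoke a separate standing assumption on the gradient.
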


\begin{proof}
First, we find the determinant of the $(n \times n)$-matrix $N_G$ whose columns are vectors $G,N_1,\dots,N_{n-1}$. The case $n = 2$ is trivial:
\[
  N_G = \left[ \begin{array}{cc}
    g_1 & -g_2 \\
    g_2 & \phantom{-}g_1
  \end{array} \right], \ \ \ \det N_G = -g_1^2 - g_2^2 = -(g_1^2 + g_2^2) = -|G|^2.
\]

Consider the case $n > 2$:
\[
  N_G = \left[
    \begin{array}{cccccc}
      g_1 & g_2 & 0 & \cdots & 0 & 0  \\
      g_2 & -g_1 & g_3 & \cdots & 0 & 0 \\
      g_3 & 0 & -g_2 & \ddots & \vdots & \vdots \\
      \vdots & \vdots & \vdots & \ddots & g_{n-1} & 0 \\
      g_{n-1} & 0 & 0 & \cdots & -g_{n-2} & g_n \\
      g_n & 0 & 0 & \cdots & 0 & -g_{n-1}
    \end{array}
  \right],
\]
using the Laplace expansion along the last row, i.e.,
\[
  \det N_G = (-1)^{n-1} g_n \left|
    \begin{array}{cccccc}
      g_2 & 0 & \cdots & 0 & 0 \\
      -g_1 & g_3 & \cdots & 0 & 0 \\
      0 & -g_2 & \ddots & \vdots & \vdots \\
      \vdots & \vdots & \ddots & g_{n-1} & 0 \\
      0 & 0 & \cdots & -g_{n-2} & g_n
    \end{array}
  \right| - g_{n-1} \left|
    \begin{array}{cccccc}
      g_1 & g_2 & 0 & \cdots & 0 \\
      g_2 & -g_1 & g_3 & \cdots & 0 \\
      g_3 & 0 & -g_2 & \ddots & \vdots \\
      \vdots & \vdots & \vdots & \ddots & g_{n-1} \\
      g_{n-1} & 0 & 0 & \cdots & -g_{n-2}
    \end{array}
  \right|.
\]

The first determinant on the right-hand side is equal to the product of diagonal elements, and the second one is similar in structure to the original determinant but for size $(n-1) \times (n-1)$. Thus, by denoting $D_n = \det N_G$, we obtain the recurrence formula
\begin{equation}\label{eqDnRecurrent}
  D_n = (-1)^{n-1} g_2 g_3 \dots g_{n-1} g_n^2 - g_{n-1} D_{n-1} = (-1)^{n-1} g_n^2 \pi_n - g_{n-1} D_{n-1}.
\end{equation}

Second, we show that
\[
  D_n = (-1)^{n-1} (g_1^2 + g_2^2 + \ldots + g_n^2) \pi_n = (-1)^{n-1} |G|^2 \pi_n,
\]
using mathematical induction.

This formula is valid for $n = 2$, since $D_2 = -|G|^2$. Further, we assume that
\[
  D_{n-1} = (-1)^{n-2} (g_1^2 + g_2^2 + \ldots + g_{n-1}^2) \pi_{n-1}.
\]
Then, in accordance with the recurrent formula~\eqref{eqDnRecurrent}, we have
\begin{align*}
  D_n & = (-1)^{n-1} g_n^2 \pi_n - g_{n-1} (-1)^{n-2} (g_1^2 + g_2^2 + \ldots + g_{n-1}^2) \pi_{n-1} = \\
  & = (-1)^{n-1} g_n^2 \pi_n + (-1)^{n-1} (g_1^2 + g_2^2 + \ldots + g_{n-1}^2) \pi_n = \\
  & = (-1)^{n-1} (g_1^2 + g_2^2 + \ldots + g_n^2) \pi_n = (-1)^{n-1} |G|^2 \pi_n.
\end{align*}

Therefore, if $g_2 \neq 0$ and also $g_3 \neq 0$, \dots, $g_{n-1} \neq 0$ for $n > 3$, then $D_n = \det N_G \neq 0$ and vectors $G,N_1,\dots,N_{n-1}$ are linearly independent. The proposition has been proven.
\end{proof}

Similarly, we can determine the set of $n$ linearly independent vectors orthogonal to the generalized gradient $\nabla_{t,x} M(t,x)$. For this, we introduce the following notations:
\begin{equation}\label{eqDefGTX}
  \tilde G = [ \, g_0 ~ g_1 ~ \dots ~ g_n \, ]^\trans, \ \ \ g_0 = \frac{\partial M(t,x)}{\partial t}, \ \ \
  g_i = \frac{\partial M(t,x)}{\partial x_i}, \ \ \ i = 1,\dots,n,
\end{equation}
as well as
\begin{equation}\label{eqBasisTX}
  \begin{aligned}
    \tilde N_0 & = [ \, 1 ~ {-g_0/g_1} \ \ 0 ~ \dots ~ 0 \ \ 0 \, ]^\trans, \\
    \tilde N_1 & = [ \, 0 \ \ \hskip 0.5em g_2 \hskip 0.5em ~ {-g_1} ~ \dots ~ 0 \ \ 0 \, ]^\trans, \\
    & \hskip 4em \dots \\
    \tilde N_{n-2} & = [ \, 0 \ \ 0 ~ \dots ~ g_{n-1} ~ {-g_{n-2}} \ \ 0 \, ]^\trans, \\
    \tilde N_{n-1} & = [ \, 0 \ \ 0 ~ \dots ~ \hskip 0.52em 0 \hskip 0.52em \ \ g_n ~ {-g_{n-1}} \, ]^\trans,
  \end{aligned}
\end{equation}
or, in general, $\tilde N_j = g_{j+1} E_j - g_j E_{j+1}$, $j = 1,\dots,n-1$, where $E_1,\dots,E_n$ are columns of the identity matrix~$E$ of size $(n+1) \times (n+1)$ if these columns are numbered from zero. By definition, they are orthogonal to vector~$\tilde G$. Vectors $\tilde G,\tilde N_0,\tilde N_1,\dots,\tilde N_{n-1}$ are functions of a pair $(t,x) \in \mathds{T} \times \mathds{R}^n$ with values in $\mathds{R}^{n+1}$. As before, the arguments of these vector functions are omitted for brevity.

\begin{Proposition}\label{prop2}
If $g_1 \neq 0$ and also $g_2 \neq 0$, \dots, $g_{n-1} \neq 0$ for $n > 2$, then vectors $\tilde G,\tilde N_0,\tilde N_1,\dots,\tilde N_{n-1}$ are linearly independent, and the determinant of the matrix formed by these vectors is equal to $(-1)^n |\tilde G|^2 \pi_n$, where $\pi_n$ is given by the formula~\eqref{eqDefPi}.
\end{Proposition}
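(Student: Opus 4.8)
The plan is to evaluate the determinant of the $(n+1)\times(n+1)$ matrix $\tilde N_G$ whose columns are $\tilde G,\tilde N_0,\tilde N_1,\dots,\tilde N_{n-1}$ by reducing it to the determinant $D_n$ already computed in Proposition~\ref{prop1}. Numbering rows and columns from zero, the decisive structural fact is that the top row of $\tilde N_G$ has only two nonzero entries: $g_0$ in the $\tilde G$-column and $1$ in the $\tilde N_0$-column (every $\tilde N_j$ with $j\geqslant 1$ vanishes in row $0$). I would therefore expand the determinant along this row, obtaining $\det\tilde N_G = g_0\,\delta_0 - \delta_1$, where $\delta_0$ and $\delta_1$ are the minors left after deleting row $0$ together with the $\tilde G$-column and the $\tilde N_0$-column, respectively; the signs $(+,-)$ come from the cofactor pattern at positions $(0,0)$ and $(0,1)$.

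For $\delta_0$, deleting the $\tilde G$-column leaves the columns $\tilde N_0,\tilde N_1,\dots,\tilde N_{n-1}$ restricted to rows $1,\dots,n$, which form an upper-triangular matrix with diagonal $-g_0/g_1,-g_1,-g_2,\dots,-g_{n-1}$. Hence $\delta_0 = (-g_0/g_1)(-g_1)(-g_2)\cdots(-g_{n-1}) = (-1)^n g_0\,\pi_n$, the telescoping cancellation $(g_0/g_1)g_1 = g_0$ being exactly what regenerates the factor $\pi_n$ from~\eqref{eqDefPi}. For $\delta_1$, deleting the $\tilde N_0$-column leaves $\tilde G,\tilde N_1,\dots,\tilde N_{n-1}$ restricted to rows $1,\dots,n$; after renumbering these rows as $1,\dots,n$, the restricted column $\tilde G$ becomes $G$ and each restricted $\tilde N_j$ becomes $N_j$, so this minor is \emph{verbatim} the matrix $N_G$ of Proposition~\ref{prop1}. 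Consequently $\delta_1 = D_n = (-1)^{n-1}|G|^2\pi_n$ by the identity proved there (which holds for all values of the $g_i$, independently of any nonvanishing hypothesis), where $|G|^2 = g_1^2+\dots+g_n^2$.

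Substituting, $\det\tilde N_G = (-1)^n g_0^2\,\pi_n - (-1)^{n-1}|G|^2\pi_n = (-1)^n\bigl(g_0^2 + g_1^2 + \dots + g_n^2\bigr)\pi_n = (-1)^n|\tilde G|^2\pi_n$, which is the asserted value. Linear independence of $\tilde G,\tilde N_0,\dots,\tilde N_{n-1}$ follows at once: the hypotheses $g_1\neq 0$ and $g_2,\dots,g_{n-1}\neq 0$ give $\pi_n\neq 0$ and $|\tilde G|^2\geqslant g_1^2>0$, so $\det\tilde N_G\neq 0$. I expect no genuine difficulty here, only careful bookkeeping: keeping the cofactor signs straight under the zero-based indexing, confirming the triangular pattern (and the $g_1$-cancellation) in $\delta_0$, and checking that removing the $\tilde N_0$-column reproduces $N_G$ exactly so that Proposition~\ref{prop1} transfers directly. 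The sole role of the extra hypothesis $g_1\neq 0$, which was not needed in Proposition~\ref{prop1}, is to make the entry $-g_0/g_1$ of $\tilde N_0$ --- and hence the whole construction --- well defined.
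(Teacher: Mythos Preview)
Your argument is correct. The bookkeeping in both minors checks out: the $\delta_0$ minor is indeed upper triangular with diagonal $-g_0/g_1,-g_1,\dots,-g_{n-1}$, the $\delta_1$ minor coincides verbatim with the matrix $N_G$ of Proposition~\ref{prop1}, and the substitution at the end is clean.

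The paper takes a slightly different route. Rather than expanding along row~$0$, it rescales the offending column: replacing $\tilde N_0$ by $g_1\tilde N_0 = [\,g_1~{-g_0}~0~\dots~0\,]^\trans$ produces an $(n+1)\times(n+1)$ matrix that is \emph{exactly} the $N_G$ matrix of Proposition~\ref{prop1}, one dimension up, with the index shift $g_k\mapsto g_{k-1}$. Proposition~\ref{prop1} then applies wholesale, giving determinant $(-1)^n|\tilde G|^2\,g_1\pi_n$, and dividing back by $g_1$ finishes. The paper's approach is a one-line reduction that reuses Proposition~\ref{prop1} at the level of the full $(n+1)$-matrix; your approach reuses it at the level of an $n\times n$ submatrix after a Laplace step. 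Yours is more explicit and self-contained (no pattern-matching on a shifted index), while the paper's is shorter once the rescaling trick is spotted. Either is perfectly adequate here.
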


\begin{proof}
The determinant of the $((n+1) \times (n+1))$-matrix, whose columns are vectors $\tilde G,g_1 \tilde N_0,\tilde N_1,\dots,\tilde N_{n-1}$, is equal to $(-1)^n |\tilde G|^2 g_1 \pi_n$. The proof of this statement is the same as the proof of Proposition~\ref{prop1}. According to the property of determinants, if all elements of column $g_1 \tilde N_0$ are divided by $g_1$, then $(-1)^n |\tilde G|^2 g_1 \pi_n$ should also be divided by $g_1$.

Thus, the determinant of the matrix, whose columns are vectors $\tilde G,\tilde N_0,\tilde N_1,\dots,\tilde N_{n-1}$, is equal to $(-1)^n |\tilde G|^2 \pi_n$. Consequently, if $g_1 \neq 0$ and also $g_2 \neq 0$, \dots, $g_{n-1} \neq 0$ for $n > 2$, then vectors $\tilde G,\tilde N_0,\tilde N_1,\dots,\tilde N_{n-1}$ are linearly independent. The proposition has been proven.
\end{proof}

Sets of linearly independent vectors orthogonal to the gradient $\nabla_x M(t,x)$ or the generalized gradient $\nabla_{t,x} M(t,x)$ can certainly be constructed in a different way. However, the proposed approach is quite sufficient due to the simplicity of implementation of the corresponding algorithm. If the conditions of Propositions~\ref{prop1} and~\ref{prop2} are satisfied, then any other set of linearly independent vectors is expressed through vectors defined above.

Additional orthogonalization, e.g., using the Gram--Schmidt process, is not assumed here, since vectors are functions of the point $(t,x) \in \mathds{T} \times \mathds{R}^n$ in the general case. This complicates the implementation of the corresponding algorithm and entails the complexity of expressions describing components of orthogonal vectors.

\begin{Remark}\label{rem2}
For $n > 2$, according to Proposition~\ref{prop1}, vectors $G,N_1,\dots,N_{n-1}$ are linearly independent if $g_1 \equiv 0$, $g_n \equiv 0$ and $g_2 \neq 0$, $g_3 \neq 0$, \dots, $g_{n-1} \neq 0$, i.e., the function $M(t,x)$ may not depend on components $x_1$ and $x_n$ of the vector $x$.

If $g_1 \equiv 0$, $g_m \equiv 0$, \dots, $g_n \equiv 0$, i.e., the function $M(t,x)$ does not depend on components $x_1,x_m,\dots,x_n$ ($2 < m < n$), then as the set of linearly independent vectors, we can take $N_1,\dots,N_{m-1}$ from the set~\eqref{eqBasisX}, supplementing them with unit vectors $E_{m+1},\dots,E_n$, columns of the identity matrix $E$ of size $n \times n$. The determinant of the matrix, whose columns are vectors $G,N_1,\dots,N_{m-1},E_{m+1},\dots,E_n$, is equal to $(-1)^{m-1} |G|^2 \pi_m$, where $\pi_m$ is given by the formula~\eqref{eqDefPi}.

The independence of the function $M(t,x)$ from components $x_1,x_m,\dots,x_n$ does not limit the generality of reasoning, since components $x_i$ under the condition $g_i \equiv 0$ can always be ordered in this way.

The same arguments are valid for the set of vectors $\tilde G,\tilde N_0,\tilde N_1,\dots,\tilde N_{n-1}$. According to Proposition~\ref{prop2}, they are linearly independent if $g_n \equiv 0$ and $g_1 \neq 0$, $g_2 \neq 0$, \dots, $g_{n-1} \neq 0$, i.e., the function $M(t,x)$ may not depend on the last component $x_n$ of the vector $x$. Let $g_m \equiv 0$, \dots, $g_n \equiv 0$, i.e., the function $M(t,x)$ does not depend on components $x_m,\dots,x_n$ ($1 < m < n$). Then, as the set of linearly independent vectors, we can take $\tilde N_0,\tilde N_1,\dots,N_{m-1}$ from the set~\eqref{eqBasisTX}, supplementing them with unit vectors $E_{m+1},\dots,E_n$, columns of the identity matrix $E$ of size $(n+1) \times (n+1)$, provided that these columns are numbered from zero.
\end{Remark}

Let $\mathcal{N}$ be the linear span of vectors $N_1,\dots,N_{n-1}$, the linear subspace of dimension $(n-1)$:
\[
  \mathcal{N} = \mathrm{span} \{ N_1,\dots,N_{n-1} \},
\]
and let $\mathcal{N}_f$ and $\mathcal{N}_a$ be linear manifolds $\mathcal{N} + N_0 + \Sigma$ and $\mathcal{N} + N_0$, respectively:
\[
  \mathcal{N}_f = \{ N_f \colon N_f = N + N_0 + \Sigma, ~ N \in \mathcal{N} \}, \ \ \ \mathcal{N}_a = \{ N_a \colon N_a = N + N_0, ~ N \in \mathcal{N} \},
\]
where vectors $N_0$ and $\Sigma$ are functions of a pair $(t,x) \in \mathds{T} \times \mathds{R}^n$ with values in $\mathds{R}^n$. The first vector is defined by the formula
\begin{equation}\label{eqDefN0}
  N_0 = [ \, {-g_0/g_1} \ \ 0 ~ \dots ~ 0 \ \ 0 \, ]^\trans,
\end{equation}
and the second one is determined by the relation~\eqref{eqDefSigma}.

The set $\mathcal{N}$ is the orthogonal complement of the set $\mathcal{V} = \{V \colon V = \alpha \nabla_x M(t,x), \alpha \in \mathds{R} \}$, i.e., an arbitrary linear combination of vectors $N_1,\dots,N_{n-1}$ is orthogonal to the gradient $\nabla_x M(t,x)$. Therefore, the condition~\eqref{eqCondition1} can be rewritten as
\begin{equation}\label{eqCondition1Geometry}
  \sigma_{*l}(t,x) \in \mathcal{N}, \ \ \ l = 1,\dots,s,
\end{equation}
or
\[
  \sigma_{*l}(t,x) = u_1^l(t,x) N_1 + \ldots + u_{n-1}^l(t,x) N_{n-1},
\]
where functions $u_1^l(t,x)$, \dots, $u_{n-1}^l(t,x)$ can be chosen arbitrarily under the additional condition of existence of a solution to the equation~\eqref{eqIto}. They represent expansion coefficients of columns $\sigma_{*l}(t,x)$ relative to the set of linearly independent vectors $N_1,\dots,N_{n-1}$, the basis of the linear subspace $\mathcal{N}$.

Conditions~\eqref{eqCondition2} and~\eqref{eqCondition3}, taking into account the above notations, can be rewritten as follows:
\begin{align}
  f(t,x) & \in \mathcal{N}_f, \label{eqCondition2Geometry} \\
  a(t,x) & \in \mathcal{N}_a, \label{eqCondition3Geometry}
\end{align}
or
\begin{gather*}
  f(t,x) = N_0 + \Sigma + u_1^0(t,x) N_1 + \ldots + u_{n-1}^0(t,x) N_{n-1}, \\
  a(t,x) = N_0 + u_1^0(t,x) N_1 + \ldots + u_{n-1}^0(t,x) N_{n-1},
\end{gather*}
where functions $u_1^0(t,x)$, \dots, $u_{n-1}^0(t,x)$, like the previously introduced functions $u_1^l(t,x)$, \dots, $u_{n-1}^l(t,x)$, can be chosen arbitrarily under the additional condition of existence of a solution to the equation~\eqref{eqIto}.

The choice of functions $u_j^l(t,x)$ for $j = 1,\dots,n-1$ and $l = 0,1,\dots,s$ specifies deterministic and stochastic components of a dynamic system (drift and diffusion coefficients). It influences such properties of invariant stochastic differential systems as stability (or partial stability) and optimality. The study of such properties requires additional conditions, e.g., stability and optimality criteria.

\begin{Remark}\label{rem3}
The proposed approach has several advantages. First, it provides a minimum number of functions that define the entire set of invariant stochastic differential systems with a given first integral. Second, coefficients of equations~\eqref{eqIto} and~\eqref{eqStr} depend on these functions linearly. If we consider such functions as the control inputs and formulate an optimal control problem for the system, then the linearity of coefficients ensures a simpler optimal control structure. Third, the definition of vectors $N_1,\dots,N_{n-1}$ allows one to ensure a degenerate diffusion matrix relative to some components, which is often necessary in applied problems such as motion control.
\end{Remark}

\begin{Remark}\label{rem4}
If $M(t,x) = M(x)$, then the vector $N_0$ is equal to zero; therefore, $\mathcal{N}$ and $\mathcal{N}_a$ coincide, and $\mathcal{N}_f$ is the linear manifold $\mathcal{N} + \Sigma$. In a particular case, the vector $\Sigma$ can also be zero, then $\mathcal{N} = \mathcal{N}_a = \mathcal{N}_f$. For example, this statement is valid for the system of equations~\eqref{eqMil}.
\end{Remark}

Next, we formulate invariance conditions that follow from the above reasoning.

\begin{Theorem}\label{thm1}
Let the conditions of Proposition~\ref{prop1} be satisfied if $M(t,x) = M(x)$, or the conditions of Proposition~\ref{prop2} be satisfied if $M(t,x) \neq M(x)$. Then,

{\rm (1)}\;For the invariance of a stochastic differential system defined by the It\^o stochastic differential equation~\eqref{eqIto}, it is necessary and sufficient that conditions~\eqref{eqCondition1Geometry} and~\eqref{eqCondition2Geometry} hold on trajectories of the random process $X(t)$;

{\rm (2)}\;For the invariance of a stochastic differential system defined by the Stratonovich stochastic differential equation~\eqref{eqStr}, it is necessary and sufficient that conditions~\eqref{eqCondition1Geometry} and~\eqref{eqCondition3Geometry} hold on trajectories of the random process $X(t)$.
\end{Theorem}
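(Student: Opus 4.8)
The plan is to deduce both statements from the already-recorded analytic characterization of a first integral, translating each analytic condition into its geometric counterpart. Recall that invariance of~\eqref{eqIto} (resp.~\eqref{eqStr}) means precisely that $M(t,X(t))$ is almost surely constant, equivalently that the It\^o (resp.\ Stratonovich) differential of $M(t,X(t))$ vanishes on trajectories; by the discussion preceding the theorem, this is equivalent to the pair~\eqref{eqCondition1},~\eqref{eqCondition2} in the It\^o case and to the pair~\eqref{eqCondition1},~\eqref{eqCondition3} in the Stratonovich case. It therefore suffices to establish the three equivalences \eqref{eqCondition1}$\Leftrightarrow$\eqref{eqCondition1Geometry}, \eqref{eqCondition2}$\Leftrightarrow$\eqref{eqCondition2Geometry}, and \eqref{eqCondition3}$\Leftrightarrow$\eqref{eqCondition3Geometry}, after which parts~(1) and~(2) follow by combining them.

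The crux is the identity $\mathcal{N} = \mathcal{V}^\perp$. By construction every $N_j$ is orthogonal to $G = \nabla_x M(t,x)$, so $\mathcal{N} \subseteq \mathcal{V}^\perp$ holds unconditionally; the reverse inclusion is where the hypotheses enter. Under the conditions of Proposition~\ref{prop1} (when $M(t,x)=M(x)$) or Proposition~\ref{prop2} (when $M(t,x) \neq M(x)$), the vectors $N_1,\dots,N_{n-1}$ are linearly independent, hence $\dim \mathcal{N} = n-1 = \dim \mathcal{V}^\perp$ and $\mathcal{N} = \mathcal{V}^\perp$. With this, \eqref{eqCondition1}$\Leftrightarrow$\eqref{eqCondition1Geometry} is immediate, since $\bigl(\sigma_{*l}(t,x),\nabla_x M(t,x)\bigr)=0$ says exactly that $\sigma_{*l}(t,x) \in \mathcal{V}^\perp = \mathcal{N}$. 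Note that the sufficiency direction of the theorem needs only the automatic inclusion $\mathcal{N}\subseteq\mathcal{V}^\perp$, whereas necessity is precisely where the nondegeneracy of $N_1,\dots,N_{n-1}$ is indispensable.

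For the affine conditions I would first record the scalar bookkeeping $(N_0,G) = -g_0$, which follows from~\eqref{eqDefN0} because only the first coordinate of $N_0$ is nonzero, giving $(-g_0/g_1)\,g_1 = -g_0$; this is well defined since $g_1 \neq 0$ under Proposition~\ref{prop2}, while for $M(t,x)=M(x)$ one has $g_0 = 0$ and $N_0 = 0$, so the identity reads $0=0$. Then $f(t,x) \in \mathcal{N}_f$ means $f(t,x) - N_0 - \Sigma \in \mathcal{N} = \mathcal{V}^\perp$, i.e.\ $\bigl(f(t,x)-\Sigma,G\bigr) = (N_0,G) = -g_0$, i.e.\ $g_0 + \bigl(f(t,x)-\Sigma(t,x),\nabla_x M(t,x)\bigr)=0$, which is exactly~\eqref{eqCondition2}. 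The identical computation with $\Sigma$ deleted yields \eqref{eqCondition3}$\Leftrightarrow$\eqref{eqCondition3Geometry} via $\mathcal{N}_a$.

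The only genuine obstacle is the equality $\mathcal{N} = \mathcal{V}^\perp$: its nontrivial inclusion rests entirely on the linear independence supplied by Propositions~\ref{prop1} and~\ref{prop2}, and this is the step that forces the nondegeneracy hypotheses. Everything else---the translation of orthogonality into subspace membership and the one-line evaluation $(N_0,G)=-g_0$---is routine, the only care needed being the case split $M(t,x)=M(x)$ versus $M(t,x)\neq M(x)$, so that the appropriate proposition is invoked and $N_0$ is either well defined or identically zero.
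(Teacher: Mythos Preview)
Your proof is correct and matches the paper's approach exactly: the paper does not provide a separate formal proof of Theorem~\ref{thm1} but simply states that it ``follows from the above reasoning,'' that reasoning being precisely the identification $\mathcal{N}=\mathcal{V}^\perp$ (asserted just before~\eqref{eqCondition1Geometry}) together with the rewriting of~\eqref{eqCondition1},~\eqref{eqCondition2},~\eqref{eqCondition3} as~\eqref{eqCondition1Geometry},~\eqref{eqCondition2Geometry},~\eqref{eqCondition3Geometry}. Your write-up makes explicit the dimension count, the computation $(N_0,G)=-g_0$, and the case split on whether $M$ depends on $t$, all of which the paper leaves to the reader.
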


The conditions of Theorem~\ref{thm1} can be weakened, taking into account Remark~\ref{rem2}. The described approach can be applied when $n > 2$ and conditions $g_2 \neq 0$, $g_3 \neq 0$, \dots, $g_{n-1} \neq 0$ used in Propositions~\ref{prop1} and~\ref{prop2} are violated on a some subset in $\mathds{T} \times \mathds{R}^n$. On such a subset, vectors $G,N_1,\dots,N_{n-1}$ are not linearly independent, i.e., the basis of the linear subspace $\mathcal{N}$ degenerates. When the basis degenerates, the condition~\eqref{eqInvariant} holds, and conditions~\eqref{eqCondition1Geometry},~\eqref{eqCondition2Geometry}, and~\eqref{eqCondition3Geometry} are only sufficient but not necessary.

For example, consider the invariant stochastic differential system for $n = 4$ and $s = 3$ with the first integral $M(t,\lambda) = M(\lambda) = (\lambda_0^2 + \lambda_1^2 + \lambda_2^2 + \lambda_3^2)/2$. The difference from the formula~\eqref{eqQuaternion} is only in the numerical coefficient (see also the system of equations~\eqref{eqSolid2} describing the rotation of a rigid body in three-dimensional space):
\begin{align*}
  G & = [ \, \lambda_0 \;\, \lambda_1 \;\, \lambda_2 \;\, \lambda_3 \: ]^\trans, \\
  N_1 & = [ \, \lambda_1 ~ {-\lambda_0} \ \ 0 \ \ 0 \, ]^\trans, \\
  N_2 & = [ \, 0 \ \ \lambda_2 ~ {-\lambda_1} \ \ 0 \, ]^\trans, \\
  N_3 & = [ \, 0 \ \ 0 \ \ \lambda_3 ~ {-\lambda_2} \, ]^\trans.
\end{align*}

Here, the basis degenerates if $\lambda_1 = 0$ or $\lambda_2 = 0$. In this problem, it is better to use vectors $\Lambda_1,\Lambda_2,\Lambda_3$, where
\begin{align*}
  \Lambda_1 & = [ \, {-\lambda_1} ~ \phantom{-}\lambda_0 ~ \phantom{-}\lambda_3 ~ {-\lambda_2} \, ]^\trans, \\
  \Lambda_2 & = [ \, {-\lambda_2} ~ {-\lambda_3} ~ \phantom{-}\lambda_0 ~ \phantom{-}\lambda_1 \, ]^\trans, \\
  \Lambda_3 & = [ \, {-\lambda_3} ~ \phantom{-}\lambda_2 ~ {-\lambda_1} ~ \phantom{-}\lambda_0 \, ]^\trans,
\end{align*}
instead of vectors $N_1,N_2,N_3$. This follows from the system of equations~\eqref{eqSolid2}, which can be rewritten in the form~\eqref{eqStr}:
\begin{equation}\label{eqSolid3}
  \begin{aligned}
    d\lambda_0(t) & = \tfrac{1}{2} \bigl( -\lambda_1(t) \omega_1(t) - \lambda_2(t) \omega_2(t) - \lambda_3(t) \omega_3(t) \bigr) dt \\
    & \ \ \ {} + \tfrac{1}{2} \bigl( -\sigma_1 \lambda_1(t) dW_1(t) - \sigma_2 \lambda_2(t) dW_2(t) - \sigma_3 \lambda_3(t) dW_3(t) \bigr), \\
    d\lambda_1(t) & = \tfrac{1}{2} \bigl( \lambda_0(t) \omega_1(t) - \lambda_3(t) \omega_2(t) + \lambda_2(t) \omega_3(t) \bigr) dt \\
    & \ \ \ {} + \tfrac{1}{2} \bigl( \sigma_1 \lambda_0(t) dW_1(t) - \sigma_2 \lambda_3(t) dW_2(t) + \sigma_3 \lambda_2(t) dW_3(t) \bigr), \\
    d\lambda_2(t) & = \tfrac{1}{2} \bigl( \lambda_3(t) \omega_1(t) + \lambda_0(t) \omega_2(t) - \lambda_1(t) \omega_3(t) \bigr) dt \\
    & \ \ \ {} + \tfrac{1}{2} \bigl( \sigma_1 \lambda_3(t) dW_1(t) + \sigma_2 \lambda_0(t) dW_2(t) - \sigma_3 \lambda_1(t) dW_3(t) \bigr), \\
    d\lambda_3(t) & = \tfrac{1}{2} \bigl( -\lambda_2(t) \omega_1(t) + \lambda_1(t) \omega_2(t) + \lambda_0(t) \omega_3(t) \bigr) dt \\
    & \ \ \ {} + \tfrac{1}{2} \bigl( -\sigma_1 \lambda_2(t) dW_1(t) + \sigma_2 \lambda_1(t) dW_2(t) + \sigma_3 \lambda_0(t) dW_3(t) \bigr),
  \end{aligned}
\end{equation}
where $W(t) = [ \, W_1(t) ~ W_2(t) ~ W_3(t) \, ]^\trans$ is the standard vector Wiener process corresponding to the vector Gaussian white noise $V(t)$.

At the same time, the first integral~\eqref{eqQuaternion} corresponds to infinitely many invariant stochastic differential systems, and the system of equations~\eqref{eqSolid3} describes only one of them.

If $\lambda_1 \neq 0$ and $\lambda_2 \neq 0$, then vectors $\Lambda_1,\Lambda_2,\Lambda_3$ are related to vectors $N_1,N_2,N_3$ by expressions
\begin{align*}
  \Lambda_1 & = -N_1 + N_3, \\
  \Lambda_2 & = -\frac{\lambda_2}{\lambda_1} \, N_1 - \biggl( \frac{\lambda_3}{\lambda_2} + \frac{\lambda_0}{\lambda_1} \biggr) N_2 - \frac{\lambda_1}{\lambda_2} \, N_3, \\
  \Lambda_3 & = -\frac{\lambda_3}{\lambda_1} \, N_1 + \biggl( 1 - \frac{\lambda_0 \lambda_3}{\lambda_1 \lambda_2} \biggr) N_2 - \frac{\lambda_0}{\lambda_2} \, N_3,
\end{align*}
but such a basis does not degenerate. Moreover, it is the orthonormal basis: $|G| = |\Lambda_1| = |\Lambda_2| = |\Lambda_3| = 1$, and the determinant of the matrix formed by vectors $G,\Lambda_1,\Lambda_2,\Lambda_3$ is $|G|^4 = 1$ (this is easy to verify). However, in the general case, it is difficult to propose a basis that is defined by the same simple formulae and has similar properties (the goal of the article is to propose precisely a simple method for forming invariant stochastic differential systems).

Consider again the system of equations~\eqref{eqMil} defining iterated stochastic integrals of the second multiplicity. Here, $n = 4$, $s = 2$, and $M(t,x) = M(x) = x_2 + x_4 - x_1 x_3$. It is easy to show that such a system of equations can be obtained by the described method. Indeed,
\[
  G = [ \, {-x_3} \ \ 1 \ \ {-x_1} \ \ 1 \, ]^\trans,
\]
hence,
\[
  N_1 = [ \, 1 \ \ x_3 \ \ 0 \ \ 0 \, ]^\trans, \ \ \
  N_2 = [ \, 0 \ \ {-x_1} \ \ {-1} \ \ 0 \, ]^\trans, \ \ \
  N_3 = [ \, 0 \ \ 0 \ \ 1 \ \ x_1 \, ]^\trans.
\]

In this case,
\[
  \sigma_{*1}(t,x) = N_1, \ \ \ \sigma_{*2}(t,x) = N_3, \ \ \ f(t,x) = a(t,x) = 0 \ \ \ (N_0 = 0, ~ \Sigma = 0),
\]
and this corresponds to conditions~\eqref{eqCondition1Geometry},~\eqref{eqCondition2Geometry}, and~\eqref{eqCondition3Geometry}.

\section{Invariant Stochastic Differential Systems of the Second, Fourth, and Eighth Orders}\label{secDim248}

In the previous section, we noted that the basis~\eqref{eqBasisX} of the linear subspace $\mathcal{N}$ can degenerate. This section examines sequentially invariant stochastic differential systems of the second, fourth, and eighth orders. Here, a basis of the linear subspace $\mathcal{N}$ is related to the definition of the multiplication of complex numbers ($n = 2$), quaternions ($n = 4$), and octonions ($n = 8$).

The following proposition is formulated for the case $n = 2$. Although trivial, it is important in the general context.

\begin{Proposition}\label{propDim2}
Let $n = 2$ and $|G| \neq 0$, where $G$ is given by the formula~\eqref{eqDefG}. Then, vectors $G$ and $N_1$, where
\begin{equation}\label{eqBasisDim2}
  N_1 = [ \, {-g_2} \ \ g_1 \, ]^\trans,
\end{equation}
are orthogonal, and the determinant of the matrix formed by these vectors is equal to $|G|^2$.
\end{Proposition}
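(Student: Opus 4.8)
The plan is to settle both assertions by direct computation, which is available here because $n = 2$ produces an explicit $2 \times 2$ matrix and there is no recursion to set up. First I would write down the matrix $N_G$ whose columns are $G$ and $N_1$, using the definitions~\eqref{eqDefG} and~\eqref{eqBasisDim2}:
\[
  N_G = \left[ \begin{array}{cc} g_1 & -g_2 \\ g_2 & \phantom{-}g_1 \end{array} \right].
\]

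For the orthogonality claim I would simply evaluate the inner product, obtaining $(G,N_1) = g_1(-g_2) + g_2 g_1 = 0$, so that $G \perp N_1$ regardless of the values of $g_1,g_2$. This requires nothing beyond the definition and does not even use the hypothesis $|G| \neq 0$.

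For the determinant I would expand along either row or column: $\det N_G = g_1 \cdot g_1 - (-g_2)\cdot g_2 = g_1^2 + g_2^2 = |G|^2$. Since $|G| \neq 0$ by hypothesis, this gives $\det N_G \neq 0$, which simultaneously yields the stated determinant value and the linear independence of $G$ and $N_1$.

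The main obstacle: there really is none, which is precisely why the statement is flagged as trivial; the entire proof is a one-line calculation. The only point worth flagging is the sign bookkeeping relative to Proposition~\ref{prop1}. The vector $N_1 = [\,{-g_2} \ \ g_1\,]^\trans$ chosen in~\eqref{eqBasisDim2} is the negative of the vector $N_1 = [\,g_2 \ \ {-g_1}\,]^\trans$ used in~\eqref{eqBasisX}, and this sign change is exactly what turns the determinant from $-|G|^2$ (the $n=2$ base case of Proposition~\ref{prop1}) into $+|G|^2$ here. This orientation is what aligns the pair $(G,N_1)$ with the multiplication rule for complex numbers motivating this section, and it is the form intended to generalize to the quaternionic ($n=4$) and octonionic ($n=8$) cases treated subsequently.
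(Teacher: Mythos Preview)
Your proof is correct and matches the paper's own argument essentially line for line: both compute $(G,N_1) = -g_1 g_2 + g_1 g_2 = 0$ and then $\det N_G = g_1^2 + g_2^2 = |G|^2$. Your closing remark about the sign flip relative to~\eqref{eqBasisX} is also exactly the observation the paper makes immediately after the proof.
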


\begin{proof}
Indeed,
\[
  (G,N_1) = -g_1 g_2 + g_1 g_2 = 0,
\]
i.e., vectors $G$ and $N_1$ are orthogonal, and
\[
  \left| \begin{array}{cc}
    g_1 & -g_2 \\
    g_2 & \phantom{-}g_1
  \end{array} \right| = g_1^2 + g_2^2 = |G|^2.
\]

The proposition has been proven.
\end{proof}

The vector $N_1$ in Proposition~\ref{propDim2} differs in sign from the corresponding vector in the set~\eqref{eqBasisX}; therefore, Proposition~\ref{propDim2} can be considered as the corollary of Proposition~\ref{prop1}.

A classic example of a second-order invariant stochastic differential system is the Kubo oscillator~\cite{Kubo_JMP63}. Trajectories of such a system almost surely belong to a circular cylinder, and the phase trajectories lie on the circle (the cylinder projection onto the phase plane). Proposition~\ref{propDim2} covers all invariant second-order stochastic differential systems, e.g., those with first integrals that correspond to elliptic, parabolic, and hyperbolic cylinders~\cite{AveKarRyb_Sibircon17}.

For $n = 4$, the system of equations~\eqref{eqSolid2} describing the rotation of a rigid body in three-dimensional space uses the non-degenerate basis. In this example, the first integral defines a three-dimensional hypersphere centered at the origin and with unit radius. However, the similar result can be formulated for more general invariant stochastic differential systems, namely with arbitrary first integrals.

\begin{Proposition}\label{propDim4}
Let $n = 4$ and $|G| \neq 0$, where $G$ is given by the formula~\eqref{eqDefG}. Then, vectors $G,N_1,N_2,N_3$ subject to
\begin{equation}\label{eqBasisDim4}
  \begin{aligned}
    N_1 & = [ \, {-g_2} ~ \phantom{-}g_1 ~ \phantom{-}g_4 ~ {-g_3} \, ]^\trans, \\
    N_2 & = [ \, {-g_3} ~ {-g_4} ~ \phantom{-}g_1 ~ \phantom{-}g_2 \, ]^\trans, \\
    N_3 & = [ \, {-g_4} ~ \phantom{-}g_3 ~ {-g_2} ~ \phantom{-}g_1 \, ]^\trans,
  \end{aligned}
\end{equation}
are orthogonal, and the determinant of the matrix formed by these vectors is equal to $|G|^4$.
\end{Proposition}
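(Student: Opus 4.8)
The plan is to mirror the structure of the proof of Proposition~\ref{propDim2}, but to avoid a tedious $4 \times 4$ Laplace expansion by exploiting orthogonality directly. First I would verify that the four vectors $G,N_1,N_2,N_3$ are pairwise orthogonal. There are six inner products to check, and each collapses immediately: for instance, $(G,N_1) = -g_1 g_2 + g_2 g_1 + g_3 g_4 - g_4 g_3 = 0$, and similarly $(N_1,N_2) = g_2 g_3 - g_1 g_4 + g_1 g_4 - g_2 g_3 = 0$. In every case the terms cancel in pairs, because each $N_j$ is obtained from $G$ by the sign-and-permutation pattern of quaternion multiplication by the imaginary units; the matrix below is essentially the left-regular representation of the quaternion $g_1 + g_2 i + g_3 j + g_4 k$.

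Next I would compute the four squared norms. Each $N_j$ is, up to signs, a permutation of the entries of $G$, so $|G|^2 = |N_1|^2 = |N_2|^2 = |N_3|^2 = g_1^2 + g_2^2 + g_3^2 + g_4^2$. Denote by $A$ the matrix whose columns are $G,N_1,N_2,N_3$. The orthogonality together with the equal norms says precisely that $A^\trans A = |G|^2 E$, where $E$ is the $4 \times 4$ identity matrix. Taking determinants gives $(\det A)^2 = \det(A^\trans A) = (|G|^2)^4 = |G|^8$, and hence $\det A = \pm |G|^4$.

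Finally I would fix the sign. Since $\det A$ is a polynomial in $g_1,\dots,g_4$ and equals $\pm|G|^4$ on the connected set $\{|G| \neq 0\}$, the sign is constant, so it suffices to evaluate at one convenient point. Taking $G = [\,1~0~0~0\,]^\trans$ yields $N_1 = [\,0~1~0~0\,]^\trans$, $N_2 = [\,0~0~1~0\,]^\trans$, $N_3 = [\,0~0~0~1\,]^\trans$, so that $A = E$ and $\det A = 1 = |G|^4$; therefore $\det A = +|G|^4$ throughout. The only real subtlety here is this sign determination: a direct cofactor expansion would also work but is error-prone, whereas the identity $A^\trans A = |G|^2 E$ reduces the entire claim to the six vanishing inner products plus a single evaluation.
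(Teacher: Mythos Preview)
Your proof is correct, and it takes a somewhat different route from the paper for the determinant computation. Both proofs handle orthogonality the same way (direct verification of the six inner products), but the paper then computes $\det A$ by a Laplace expansion along the first column, exhibiting each $3\times 3$ minor as $\pm g_i\,|G|^2$ and summing to $|G|^4$. You instead use the identity $A^\trans A = |G|^2 E$ to get $\det A = \pm|G|^4$ and then pin down the sign by evaluating at $G = [\,1~0~0~0\,]^\trans$. Your argument is cleaner and generalizes effortlessly to the $n=8$ case (where the paper's direct expansion would be painful), at the modest cost of the separate sign step. In fact the paper's own Remark~\ref{rem5} notes the orthonormality observation you exploit, but uses it only to conclude non-singularity rather than to extract the exact value of the determinant; you push that idea one step further.
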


\begin{proof}
Any of the vectors $N_1,N_2,N_3$ can be obtained from vector $G$ using the following transformation. Components of vector $G$ are divided into pairs, then elements in each pair are permuted, and the sign of one element in the pair changes after the permutation. Geometrically, such a transformation corresponds to rotating the points of the plane by a right angle.

This transformation ensures pairwise orthogonality of vectors $G,N_1,N_2,N_3$. It can be verified by directly calculating the pairwise inner products (they are equal to zero).

Next, we find the determinant of the matrix formed by such vectors:
\begin{align*}
  & \left| \begin{array}{cccc}
    g_1 & -g_2 & -g_3 & -g_4 \\
    g_2 & \phantom{-}g_1 & -g_4 & \phantom{-}g_3 \\
    g_3 & \phantom{-}g_4 & \phantom{-}g_1 & -g_2 \\
    g_4 & -g_3 & \phantom{-}g_2 & \phantom{-}g_1
  \end{array} \right| = g_1 \underbrace{\left| \begin{array}{ccc}
    \phantom{-}g_1 & -g_4 & \phantom{-}g_3 \\
    \phantom{-}g_4 & \phantom{-}g_1 & -g_2 \\
    -g_3 & \phantom{-}g_2 & \phantom{-}g_1
  \end{array} \right|}_{g_1 (g_1^2 + g_2^2 + g_3^2 + g_4^2)} - \,
  g_2 \underbrace{\left| \begin{array}{ccc}
    -g_2 & -g_3 & -g_4 \\
    \phantom{-}g_4 & \phantom{-}g_1 & -g_2 \\
    -g_3 & \phantom{-}g_2 & \phantom{-}g_1
  \end{array} \right|}_{-g_2 (g_1^2 + g_2^2 + g_3^2 + g_4^2)} \\
  & \ \ \ {} + g_3 \underbrace{\left| \begin{array}{ccc}
    -g_2 & -g_3 & -g_4 \\
    \phantom{-}g_1 & -g_4 & \phantom{-}g_3 \\
    -g_3 & \phantom{-}g_2 & \phantom{-}g_1
  \end{array} \right|}_{g_3 (g_1^2 + g_2^2 + g_3^2 + g_4^2)} - \,
  g_4 \underbrace{\left| \begin{array}{ccc}
    -g_2 & -g_3 & -g_4 \\
    \phantom{-}g_1 & -g_4 & \phantom{-}g_3 \\
    \phantom{-}g_4 & \phantom{-}g_1 & -g_2
  \end{array} \right|}_{-g_4 (g_1^2 + g_2^2 + g_3^2 + g_4^2)} = (g_1^2 + g_2^2 + g_3^2 + g_4^2)^2 = |G|^4.
\end{align*}

The proposition has been proven.
\end{proof}

\begin{Remark}\label{rem5}
In the context of this work, it is sufficient to show that the matrix $N_G$ formed by vectors $G,N_1,N_2,N_3$ is non-singular.

It is easy to see that $|G| = |N_1| = |N_2| = |N_3|$, i.e., vectors $G/|G|,N_1/|G|,N_2/|G|,N_3/|G|$ are orthonormal. The matrix formed by these vectors is orthogonal and, therefore, non-singular. This implies that the matrix $N_G$ is non-singular.
\end{Remark}

Next, we consider the case $n = 8$.

\begin{Proposition}\label{propDim8}
Let $n = 8$ and $|G| \neq 0$, where $G$ is given by the formula~\eqref{eqDefG}. Then, vectors $G,N_1,\dots,N_7$ subject to
\begin{equation}\label{eqBasisDim8}
  \begin{aligned}
    N_1 & = [ \, {-g_2} ~ \phantom{-}g_1 ~ \phantom{-}g_4 ~ {-g_3} ~ \phantom{-}g_6 ~ {-g_5} ~ {-g_8} ~ \phantom{-}g_7 \, ]^\trans, \\
    N_2 & = [ \, {-g_3} ~ {-g_4} ~ \phantom{-}g_1 ~ \phantom{-}g_2 ~ \phantom{-}g_7 ~ \phantom{-}g_8 ~ {-g_5} ~ {-g_6} \, ]^\trans, \\
    N_3 & = [ \, {-g_4} ~ \phantom{-}g_3 ~ {-g_2} ~ \phantom{-}g_1 ~ \phantom{-}g_8 ~ {-g_7} ~ \phantom{-}g_6 ~ {-g_5} \, ]^\trans, \\
    N_4 & = [ \, {-g_5} ~ {-g_6} ~ {-g_7} ~ {-g_8} ~ \phantom{-}g_1 ~ \phantom{-}g_2 ~ \phantom{-}g_3 ~ \phantom{-}g_4 \, ]^\trans, \\
    N_5 & = [ \, {-g_6} ~ \phantom{-}g_5 ~ {-g_8} ~ \phantom{-}g_7 ~ {-g_2} ~ \phantom{-}g_1 ~ {-g_4} ~ \phantom{-}g_3 \, ]^\trans, \\
    N_6 & = [ \, {-g_7} ~ \phantom{-}g_8 ~ \phantom{-}g_5 ~ {-g_6} ~ {-g_3} ~ \phantom{-}g_4 ~ \phantom{-}g_1 ~ {-g_2} \, ]^\trans, \\
    N_7 & = [ \, {-g_8} ~ {-g_7} ~ \phantom{-}g_6 ~ \phantom{-}g_5 ~ {-g_4} ~ {-g_3} ~ \phantom{-}g_2 ~ \phantom{-}g_1 \, ]^\trans,
  \end{aligned}
\end{equation}
are orthogonal, and the matrix formed by these vectors is non-singular.
\end{Proposition}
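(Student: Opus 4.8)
The plan is to follow Remark~\ref{rem5} rather than to expand an $8 \times 8$ determinant directly. The decisive observation is that every vector $N_i$ in~\eqref{eqBasisDim8} is a signed permutation of $G$: its entries are $g_1, \dots, g_8$ reordered with some signs flipped. Since neither reordering nor sign change alters the sum of squares, we obtain $|N_1| = \dots = |N_7| = |G|$ at once. Consequently, once pairwise orthogonality is established, the normalized vectors $G/|G|, N_1/|G|, \dots, N_7/|G|$ form an orthonormal system, the matrix $N_G$ equals $|G|$ times an orthogonal matrix, and non-singularity follows immediately (in fact $\det N_G = \pm\,|G|^8 \neq 0$, the sign being $+1$ as one checks by evaluating at $G = E_1$, where $N_G$ reduces to the identity).

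First I would establish the orthogonality, which carries the actual content. There are $28$ inner products to verify, namely $(G, N_i)$ and $(N_i, N_j)$ for $1 \leqslant i < j \leqslant 7$, and each is a signed sum of eight monomials $\pm g_p g_q$ that cancel in four matching pairs. For example, $(G, N_1) = -g_1 g_2 + g_2 g_1 + g_3 g_4 - g_4 g_3 + g_5 g_6 - g_6 g_5 - g_7 g_8 + g_8 g_7 = 0$, and every other product collapses in the same fashion. I would organize this bookkeeping through the Cayley--Dickson block structure of~\eqref{eqBasisDim8}, in which the columns restricted to the first four coordinates reproduce the quaternion matrix~\eqref{eqBasisDim4} and a conjugated copy of that pattern governs the last four coordinates, so that the monomials split consistently into a first half and a second half.

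The principal obstacle is the sheer volume of sign bookkeeping rather than any conceptual difficulty. The conceptual explanation, which I would use to guard against sign slips, is that $G, N_1, \dots, N_7$ are precisely the columns of the left-multiplication matrix $L_g$ for the octonion $g = g_1 + g_2 e_1 + \dots + g_8 e_7$; the equal-norm and orthogonality assertions then restate the composition-algebra identity $|gx| = |g|\,|x|$, which forces $L_g$ to be $|g|$ times an orthogonal transformation. I would confirm the signs in~\eqref{eqBasisDim8} against one fixed octonion multiplication table and then let the $28$ cancellations follow as routine verifications.
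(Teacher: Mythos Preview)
Your proposal is correct and follows essentially the same route as the paper: the paper's own proof simply points back to the reasoning of Proposition~\ref{propDim4} (pairwise orthogonality by direct cancellation of signed monomials) together with Remark~\ref{rem5} (equal norms $|N_i|=|G|$, hence $N_G/|G|$ is orthogonal and $N_G$ non-singular). Your added observations---the Cayley--Dickson block structure, the octonion left-multiplication interpretation, and the sign check at $G=E_1$ giving $\det N_G = +|G|^8$---are helpful elaborations but do not depart from the paper's argument.
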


The proof of this proposition is based on the reasoning used in both the proof of Proposition~\ref{propDim4} and Remark~\ref{rem5}. Additionally, it can be shown that the determinant of the matrix formed by vectors $G,N_1,\dots,N_7$ is equal to $|G|^8$.

It is impossible to construct an orthogonal basis in a space of arbitrary dimension in the same way, i.e., by partitioning components of vector $G$ into pairs, then permuting elements and changing the sign of one element in all pairs. The properties of algebras of complex numbers, quaternions, and octonions are significantly important here~\cite{KonSmi_03}. However, we can use the orthogonal basis corresponding to cases $n = 4$ or $n = 8$ in spaces of lower dimensions.

For example, consider the basis~\eqref{eqBasisDim4} corresponding to the case $n = 4$, setting $g_4 = 0$ and not taking into account the last component (the projection of the basis in $\mathds{R}^3$):
\begin{align*}
  N_1 & = [ \, {-g_2} ~ \phantom{-}g_1 ~ \mZi \, ]^\trans, \\
  N_2 & = [ \, {-g_3} ~ \mZi ~ \phantom{-}g_1 \, ]^\trans, \\
  N_3 & = [ \, \mZi ~ \phantom{-}g_3 ~ {-g_2} \, ]^\trans.
\end{align*}

Vectors $N_1,N_2,N_3$ are orthogonal to the vector $G = [ \, g_1 ~ g_2 ~ g_3 \, ]^\trans$, but the determinant of the matrix formed by vectors $N_1,N_2,N_3$ is equal to zero. However, the rank of such a matrix is equal to two provided that $|G| \neq 0$, i.e., there exist two linearly independent vectors. As an example, we can consider diffusion on a sphere, namely a problem that involves a third-order invariant stochastic differential system~\cite{Kar_14}. Its state belongs to a sphere in three-dimensional space centered at the origin and with a radius determined by the initial state. This is precisely the set of vectors used in such a problem.

As another example, consider the basis~\eqref{eqBasisDim8} for the case $n = 8$, setting $g_7 = g_8 = 0$ and ignoring the last two components (the projection of the basis in $\mathds{R}^6$):
\begin{align*}
  N_1 & = [ \, {-g_2} ~ \phantom{-}g_1 ~ \phantom{-}g_4 ~ {-g_3} ~ \phantom{-}g_6 ~ {-g_5} \, ]^\trans, \\
  N_2 & = [ \, {-g_3} ~ {-g_4} ~ \phantom{-}g_1 ~ \phantom{-}g_2 ~ \mZi ~ \mZi \, ]^\trans, \\
  N_3 & = [ \, {-g_4} ~ \phantom{-}g_3 ~ {-g_2} ~ \phantom{-}g_1 ~ \mZi ~ \mZi \, ]^\trans, \\
  N_4 & = [ \, {-g_5} ~ {-g_6} ~ \mZi ~ \mZi ~ \phantom{-}g_1 ~ \phantom{-}g_2 \, ]^\trans, \\
  N_5 & = [ \, {-g_6} ~ \phantom{-}g_5 ~ \mZi ~ \mZi ~ {-g_2} ~ \phantom{-}g_1 \, ]^\trans, \\
  N_6 & = [ \, \mZi ~ \mZi ~ \phantom{-}g_5 ~ {-g_6} ~ {-g_3} ~ \phantom{-}g_4 \, ]^\trans, \\
  N_7 & = [ \, \mZi ~ \mZi ~ \phantom{-}g_6 ~ \phantom{-}g_5 ~ {-g_4} ~ {-g_3} \, ]^\trans.
\end{align*}

Vectors $N_1,\dots,N_7$ are orthogonal to the vector $G = [ \, g_1 ~ g_2 ~ g_3 ~ g_4 ~ g_5 ~ g_6 \, ]^\trans$. The rank of the matrix formed by vectors $N_1,\dots,N_7$ is equal to five provided that $|G| \neq 0$, i.e., we can choose five linearly independent vectors from seven vectors. Projections of the basis in $\mathds{R}^5$ and $\mathds{R}^7$ are constructed similarly.

Next, we restrict ourselves to the case $M(t,x) = M(x)$ (see Remark~\ref{rem4}) and rewrite conditions~\eqref{eqCondition1},~\eqref{eqCondition2}, and~\eqref{eqCondition3} as
\begin{align}
  \sigma_{*l}(t,x) & \in \mathcal{N}, \ \ \ l = 1,\dots,s, \label{eqCondition1GeometryA} \\
  f(t,x) & \in \mathcal{N}_f, \label{eqCondition2GeometryA} \\
  a(t,x) & \in \mathcal{N}_a, \label{eqCondition3GeometryA}
\end{align}
where
\[
  \mathcal{N} = \mathrm{span} \{ N_1,\dots,N_{n-1} \}, \ \ \ \mathcal{N}_f = \{ N_f \colon N_f = N + \Sigma, ~ N \in \mathcal{N} \}, \ \ \ \mathcal{N}_a = \mathcal{N}.
\]

In this case, vectors $N_1,\dots,N_{n-1}$ are determined by the formulae~\eqref{eqBasisDim2},~\eqref{eqBasisDim4}, or~\eqref{eqBasisDim8} for $n = 2$, $n = 4$, or $n = 8$, respectively. The vector $\Sigma$, as before, is defined by the relation~\eqref{eqDefSigma}.

Now we can formulate weaker invariance conditions compared to Theorem~\ref{thm1}.

\begin{Theorem}\label{thm2}
Let $M(t,x) \neq M(x)$, $n \in \{2,4,8\}$, and $|G| \neq 0$, where $G$ is given by the formula~\eqref{eqDefG}. Then,

{\rm (1)}\;For the invariance of a stochastic differential system defined by the It\^o stochastic differential equation~\eqref{eqIto}, it is necessary and sufficient that conditions~\eqref{eqCondition1GeometryA} and~\eqref{eqCondition2GeometryA} hold on trajectories of the random process $X(t)$;

{\rm (2)}\;For the invariance of a stochastic differential system defined by the Stratonovich stochastic differential equation~\eqref{eqStr}, it is necessary and sufficient that conditions~\eqref{eqCondition1GeometryA} and~\eqref{eqCondition3GeometryA} hold on trajectories of the random process $X(t)$.
\end{Theorem}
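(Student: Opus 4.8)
The plan is to mirror the reasoning that yields Theorem~\ref{thm1}, the only change being that the generic basis~\eqref{eqBasisX} of $\mathcal{N}$ is replaced by the dimension-specific bases~\eqref{eqBasisDim2},~\eqref{eqBasisDim4}, and~\eqref{eqBasisDim8}. As recalled in Section~\ref{secInvariant}, the process $M(t,X(t))$ is almost surely constant---equivalently, its It\^o (respectively Stratonovich) differential vanishes---if and only if the analytic conditions~\eqref{eqCondition1} and~\eqref{eqCondition2} (respectively~\eqref{eqCondition1} and~\eqref{eqCondition3}) hold on trajectories of $X(t)$. It therefore suffices to prove that, under the present hypotheses, these analytic conditions are equivalent to the geometric ones~\eqref{eqCondition1GeometryA},~\eqref{eqCondition2GeometryA}, and~\eqref{eqCondition3GeometryA}.

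First I would isolate the decisive linear-algebra fact. By Propositions~\ref{propDim2},~\ref{propDim4}, and~\ref{propDim8}, for $n \in \{2,4,8\}$ the matrix with columns $G,N_1,\dots,N_{n-1}$ is non-singular as soon as $|G| \neq 0$. Hence $N_1,\dots,N_{n-1}$ are linearly independent, and since each is orthogonal to $G$ they span the entire orthogonal complement $G^{\perp}$, which has dimension $n-1$. Consequently, for every $v \in \mathds{R}^n$ one has $(v,G) = 0$ if and only if $v \in \mathcal{N} = \mathrm{span}\{N_1,\dots,N_{n-1}\}$; this is exactly the identification $\mathcal{N} = \mathcal{V}^{\perp}$ used earlier, now valid under the weaker assumption $|G| \neq 0$.

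Applying this equivalence to each column converts~\eqref{eqCondition1}, i.e.\ $(\sigma_{*l}(t,x),\nabla_x M(t,x)) = 0$, into $\sigma_{*l}(t,x) \in \mathcal{N}$, which is~\eqref{eqCondition1GeometryA}. In the reduced setting of this section, where $N_0 = 0$ by Remark~\ref{rem4}, condition~\eqref{eqCondition2} reads $(f(t,x) - \Sigma(t,x),\nabla_x M(t,x)) = 0$; the same equivalence gives $f(t,x) - \Sigma(t,x) \in \mathcal{N}$, that is $f(t,x) \in \mathcal{N} + \Sigma = \mathcal{N}_f$, which is~\eqref{eqCondition2GeometryA}. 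Likewise~\eqref{eqCondition3} becomes $(a(t,x),\nabla_x M(t,x)) = 0$, equivalent to $a(t,x) \in \mathcal{N} = \mathcal{N}_a$, which is~\eqref{eqCondition3GeometryA}. Combining these three equivalences with the analytic characterization proves both assertions (1) and (2).

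The one delicate point---and the reason the hypotheses are weaker than in Theorem~\ref{thm1}---is the spanning claim of the second paragraph. For the generic basis~\eqref{eqBasisX} the non-degeneracy requires $g_2 \neq 0,\dots,g_{n-1} \neq 0$, whereas the bases~\eqref{eqBasisDim2}--\eqref{eqBasisDim8} remain non-degenerate under the single condition $|G| \neq 0$. This is precisely the content of Propositions~\ref{propDim2}--\ref{propDim8} and rests on the normed division-algebra structure available only for $n \in \{2,4,8\}$. Once the spanning property is secured, the remainder is the purely geometric rephrasing already carried out for Theorem~\ref{thm1}, so no further computation is needed.
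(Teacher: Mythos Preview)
Your argument is correct and matches the paper's approach exactly: the paper states Theorem~\ref{thm2} without an explicit proof, relying on the same reformulation used for Theorem~\ref{thm1} with Propositions~\ref{propDim2}--\ref{propDim8} supplying non-degeneracy of the basis under the single hypothesis $|G| \neq 0$. Note only that the printed hypothesis ``$M(t,x) \neq M(x)$'' is evidently a misprint for ``$M(t,x) = M(x)$'' (the paragraph immediately preceding the theorem restricts to that case, and the sets $\mathcal{N}_f,\mathcal{N}_a$ in~\eqref{eqCondition1GeometryA}--\eqref{eqCondition3GeometryA} are defined without the shift $N_0$); you have, correctly, proved the intended autonomous version by invoking $N_0 = 0$ via Remark~\ref{rem4}.
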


For example, consider the case $n = 4$ and $M(t,x) = M(x) = x_2 + x_4 - x_1 x_3$ (see also the system of equations~\eqref{eqMil} defining iterated stochastic integrals of the second multiplicity). We restrict ourselves to the simplest condition~\eqref{eqCondition1GeometryA} for brevity. The formula~\eqref{eqBasisDim4} defines the following vectors:
\[
  N_1 = [ \, {-1} \ \ {-x_3} \ \ 1 \ \ x_1 \, ]^\trans, \ \ \
  N_2 = [ \, x_1 \ \ {-1} \ \ {-x_3} \ \ 1 \, ]^\trans, \ \ \
  N_3 = [ \, {-1} \ \ {-x_1} \ \ {-1} \ \ {-x_3} \, ]^\trans,
\]
since
\[
  G = [ \, {-x_3} \ \ 1 \ \ {-x_1} \ \ 1 \, ]^\trans.
\]

Then, the condition~\eqref{eqCondition1GeometryA} can be rewritten in the form
\[
  \sigma_{*l}(t,x) = u_1^l(t,x) N_1 + u_2^l(t,x) N_2 + u_3^l(t,x) N_3, \ \ \ l = 1,\dots,s,
\]
where functions $u_1^l(t,x)$, $u_2^l(t,x)$, and $u_3^l(t,x)$ can be chosen arbitrarily under the additional condition of existence of a solution to the corresponding stochastic differential equation.

For the condition~\eqref{eqCondition2GeometryA}, vector $\Sigma$ should be found, while the condition~\eqref{eqCondition3GeometryA} does not require any additions.

The approach described in~\cite{Dub_89, Kar_15, Kar_21} yields the following condition:
\[
  \sigma_{*l}(t,x) = q^l(t,x) \left|
    \begin{array}{cccc}
      E_1 & E_2 & E_3 & E_4 \\
      {-x_3} & 1 & {-x_1} & 1 \\
      \mu_1^l(t,x) & \mu_2^l(t,x) & \mu_3^l(t,x) & \mu_4^l(t,x) \\
      \nu_1^l(t,x) & \nu_2^l(t,x) & \nu_3^l(t,x) & \nu_4^l(t,x)
    \end{array}
  \right|, \ \ \ l = 1,\dots,s,
\]
where the determinant is understood formally as for a vector product. Its first row is formed by unit vectors $E_1,\dots,E_4$, columns of the identity matrix $E$ of size $4 \times 4$. The second row contains components of vector $G$, and the remaining rows contain functions $\mu_1^l(t,x)$, \dots, $\mu_4^l(t,x)$ and $\nu_1^l(t,x)$, \dots, $\nu_4^l(t,x)$. The choice of these functions along with the function $q^l(t,x) \neq 0$ is limited by the existence condition for a solution to the corresponding stochastic differential equation.

According to~\cite{Dub_89, Kar_15, Kar_21}, conditions for coefficients $f(t,x)$ and $a(t,x)$ are specified by the determinant of the $(5 \times 5)$-matrix similar in structure to the above determinant of the $(4 \times 4)$-matrix.

This example demonstrates that the proposed method provides simpler expressions for coefficients of stochastic differential equations and a minimum number of functions required to determine the entire set of invariant stochastic differential systems with a given first integral. All such functions are included in coefficients of equations linearly.

\section{Computational Experiments}\label{secNumerical}

This section contains examples of invariant stochastic differential systems and the results of numerical simulations for them.

\begin{Example}\label{ex1}
This example examines the invariant stochastic differential system whose state belongs to the catenoid
\[
  x_1^2 + x_2^2 = \cosh^2 x_3,
\]
i.e., the first integral has the form $M(t,x) = M(x) = x_1^2 + x_2^2 - \cosh^2 x_3$. The catenoid is a surface formed by rotating a catenary about an axis~\cite{KriIva_15}. It represents a solution to a well-known problem in the calculus of variations, namely finding the minimal surface of revolution.

Here,
\[
  n = 3, \ \ \ X(t) = [ \, X_1(t) ~ X_2(t) ~ X_3(t) \, ]^\trans, \ \ \ G = \nabla_x M(t,x) = [ \, 2x_1 \ \ 2x_2 \ \ {-\sinh 2x_3} \, ]^\trans,
\]
and therefore, two linearly independent vectors (the condition of linear independence is $x_2 \neq 0$), orthogonal to the gradient, according to the set~\eqref{eqBasisX} are represented in the form
\[
  N_1 = [ \, 2x_2 \ \ {-2x_1} \ \ 0 \, ]^\trans, \ \ \ N_2 = [ \, 0 \ \ {-\sinh 2x_3} \ \ {-2x_2} \, ]^\trans.
\]

Since $M(t,x) = M(x)$, the vector $N_0$ is equal to zero (see Remark~\ref{rem4}).

Next, we restrict ourselves to a scalar Wiener process, i.e., $s = 1$. Then, according to the condition~\eqref{eqCondition1Geometry}, we have
\[
  \sigma(t,x) = u_1^1(t,x) N_1 + u_2^1(t,x) N_2.
\]

Based on conditions~\eqref{eqCondition2Geometry} and~\eqref{eqCondition3Geometry}, we obtain
\[
  f(t,x) = \Sigma + u_1^0(t,x) N_1 + u_2^0(t,x) N_2, \ \ \ a(t,x) = u_1^0(t,x) N_1 + u_2^0(t,x) N_2,
\]
where $\Sigma$ is given by the formula~\eqref{eqDefSigma}:
\[
  \Sigma = \frac{1}{2} \, \frac{\partial \sigma(t,x)}{\partial x} \, \sigma(t,x).
\]

Thus, a system whose trajectories in four-dimensional space belong to a hypercylinder over a catenoid can be described by equations~\eqref{eqIto} or~\eqref{eqStr}. Coefficients for these equations are given above. The initial state $x_0 = [ \, x_{10} \ \ x_{20} \ \ x_{30} \, ]^\trans$ should belong to the catenoid, i.e., $x_{10}^2 + x_{20}^2 = \cosh^2 x_{30}$.

Below, we give a concrete example, assuming that $u_1^0(t,x) = 1/5$, $u_2^0(t,x) = 0$, $u_1^1(t,x) = 1/3$, and $u_2^1(t,x) = 1/10$. In this case,
\begin{align*}
  \sigma(t,x) & = [ \, 2x_2 / 3 \ \ \ {-2x_1 / 3 - (\sinh 2x_3) / 10} \ \ \ {-x_2 / 5} \, ]^\trans, \\
  \Sigma(t,x) & = [ \, {-2x_1/9 - (\sinh 2x_3) / 30} \ \ \ x_2 (18\sinh^2 2x_3 - 91) / 450 \\
  & \ \ \ \ \ \ \ \ \ \ \ \ x_1/15 + (\sinh 2x_3) / 100 \, ]^\trans, \\
  f(t,x) & = [ \, 2x_2 / 5 - 2x_1 / 9 - (\sinh 2x_3) / 30 \ \ \ {-2x_1 / 5 + x_2 (18\sinh^2 2x_3 - 91) / 450} \\
  & \ \ \ \ \ \ \ \ \ \ \ \ x_1/15 + (\sinh 2x_3) / 100 \, ]^\trans, \\
  a(t,x) & = [ \, 2x_2 / 5 \ \ \ {-2x_1 / 5} \ \ \ 0 \, ]^\trans.
\end{align*}

The equation~\eqref{eqIto} for the example under consideration is written in the form
\begin{align*}
  d \left[ \begin{array}{c} X_1(t) \\ X_2(t) \\ X_3(t) \end{array} \right] & =
    \left[ \begin{array}{c} 2X_2(t) / 5 - 2X_1(t) / 9 - (\sinh 2X_3(t)) / 30 \\ {-2X_1(t) / 5 + X_2(t) (18\sinh^2 2X_3(t) - 91) / 450} \\ X_1(t)/15 + (\sinh 2X_3(t)) / 100 \end{array} \right] dt \\
  & \ \ \ {} + \left[ \begin{array}{c} 2X_2(t) / 3 \\ {-2X_1(t) / 3 - (\sinh 2X_3(t)) / 10} \\ {-X_2(t) / 5} \end{array} \right] dW(t),
\end{align*}
and we also have the corresponding equation~\eqref{eqStr}:
\[
  d \left[ \begin{array}{c} X_1(t) \\ X_2(t) \\ X_3(t) \end{array} \right] =
    \left[ \begin{array}{c} 2X_2(t) / 5 \\ {-2X_1(t) / 5} \\ 0 \end{array} \right] dt +
    \left[ \begin{array}{c} 2X_2(t) / 3 \\ {-2X_1(t) / 3 - (\sinh 2X_3(t)) / 10} \\ {-X_2(t) / 5} \end{array} \right] \circ dW(t),
\]
where $W(t)$ is the standard Wiener process.

Trajectories of the solution to stochastic differential equations mentioned above almost surely belong to a three-dimensional manifold in $\mathds{T} \times \mathds{R}^3$, the projection of which onto phase space $\mathds{R}^3$ is a catenoid.

It is not possible to find the analytical solution, so we will use a numerical method for modeling trajectories. Let $t \in \mathds{T} = [0,10]$, i.e., $t_0 = 0$ and $T = 10$. We will apply the Milstein method~\cite{MilTre_04} with the numerical integration step $h = 0.01$ and initial conditions
\[
  X_1(0) = x_{10} = 0, \ \ \ X_2(0) = x_{20} = 1, \ \ \ X_3(0) = x_{30} = 0.
\]

Let $\{t_k\}$ be a partition of the time interval $[0,10]$ with the given step $h$, i.e.,
\[
  t_{k+1} = t_k + h, \ \ \ k = 0,1,\dots,N-1, \ \ \ t_N = 10, \ \ \ N = \frac{10}{h},
\]
then for the Milstein method, we obtain
\[
  Y_{k+1} = Y_k + h a(t_k,Y_k) + \sqrt{h} \sigma(t_k,Y_k) \xi_k + \frac{h}{2} \, \frac{\partial \sigma(t_k,Y_k)}{\partial x} \, \sigma(t_k,Y_k) \xi_k^2, \ \ \ Y_0 = x_0,
\]
where random variables $\xi_k$ have a standard normal distribution, and they are independent. Thus, $\{Y_k\}$ is the discrete approximation of the random process $X(t)$, and the vector $Y_k$ corresponds to time $t_k$. Three sample phase trajectories of the numerical solution approximating the random process $X(t)$ are shown in Figure~\ref{picEx1Atr}. In the horizontal plane, we use axes for $x_1$ (left) and $x_2$ (right), and the vertical axis corresponds~$x_3$.

\begin{figure}[ht]
  \begin{center}
  \ifnum \showfigures = 1
  \includegraphics[scale = 0.8]{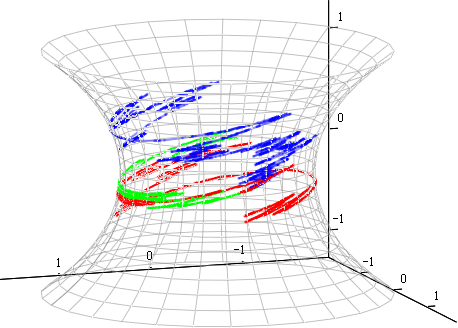}
  \fi
  \end{center}
  \caption{Sample phase trajectories of the numerical solution corresponding to the random process $X(t)$, $x_0 = [ \, 0 \ \ 1 \ \ 0 \, ]^\trans$.}
  \label{picEx1Atr}
\end{figure}

The Milstein method provides first-order strong and, therefore, weak convergence~\cite{MilTre_04}. This means that the error in the numerical solution can be estimated as follows:
\[
  \varepsilon = \mathrm{E} |M \bigl( 10,X(10) \bigr) - M(10,Y_N)| = \mathrm{E} |M(0,x_0) - M(10,Y_N)| \leqslant c h,
\]
where $\mathrm{E}$ is the mathematical expectation, $c > 0$ is a constant that does not depend on the step $h$.

When simulating 1000 sample trajectories of the numerical solution and averaging, the following estimates of the error are obtained: $\varepsilon = 3.315 \cdot 10^{-2}$ for $h = 0.01$, $\varepsilon = 3.295 \cdot 10^{-3}$ for $h = 0.001$, $\varepsilon = 3.196 \cdot 10^{-4}$ for $h = 0.0001$. The reduction in the error that occurs as the numerical integration step decreases corresponds to the first-order convergence.

Further, we change initial conditions (for new initial conditions, the basis degenerates):
\[
  X_1(0) = x_{10} = 1, \ \ \ X_2(0) = x_{20} = 0, \ \ \ X_3(0) = x_{30} = 0.
\]

Figure~\ref{picEx1Btr} illustrates three sample phase trajectories of the numerical solution that approximates the random process $X(t)$. The axes in this figure are oriented similar to Figure~\ref{picEx1Atr}.

\begin{figure}[ht]
  \begin{center}
  \ifnum \showfigures = 1
  \includegraphics[scale = 0.8]{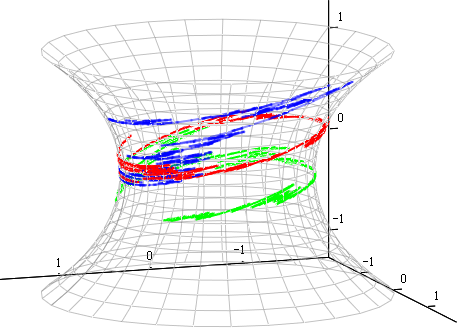}
  \fi
  \end{center}
  \caption{Sample phase trajectories of the numerical solution corresponding to the random process $X(t)$, $x_0 = [ \, 1 \ \ 0 \ \ 0 \, ]^\trans$.}
  \label{picEx1Btr}
\end{figure}

When simulating 1000 sample trajectories of the numerical solution and averaging, the following estimates of the error are obtained: $\varepsilon = 3.116 \cdot 10^{-2}$ for $h = 0.01$, $\varepsilon = 3.262 \cdot 10^{-3}$ for $h = 0.001$, $\varepsilon = 3.393 \cdot 10^{-4}$ for $h = 0.0001$. Here, the error depends on the numerical integration step in the same way.
\end{Example}

\begin{Example}\label{ex2}
In this example, we consider the invariant stochastic differential system whose trajectories satisfy the condition
\[
  X_1(t) + X_2^2(t) + \cos 2t = C,
\]
i.e., with the first integral $M(t,x) = x_1 + x_2^2 + \cos 2t$.

In this case,
\begin{gather*}
  n = 2, \ \ \ X(t) = [ \, X_1(t) ~ X_2(t) \, ]^\trans, \ \ \ G = \nabla_x M(t,x) = [ \, 1 \ \ 2x_2 \, ]^\trans, \\
  \tilde G = \nabla_{t,x} M(t,x) = [ \, -2\sin 2t \ \ 1 \ \ 2x_2 \, ]^\trans.
\end{gather*}

From the formula~\eqref{eqBasisTX}, we find two linearly independent vectors orthogonal to the generalized gradient:
\[
  \tilde N_0 = [ \, 1 \ \ 2\sin 2t \ \ 0 \, ]^\trans, \ \ \ \tilde N_1 = [ \, 0 \ \ 2x_2 \ \ {-1} \, ]^\trans,
\]
consequently,
\[
  N_0 = [ \, 2\sin 2t \ \ 0 \, ]^\trans, \ \ \ N_1 = [ \, 2x_2 \ \ {-1} \, ]^\trans,
\]
where the vector $N_1$ is orthogonal to the gradient.

As in Example~\ref{ex1}, we restrict attention to a scalar Wiener process by setting $s = 1$. Here, the condition~\eqref{eqCondition1Geometry} is written as follows:
\[
  \sigma(t,x) = u_1^1(t,x) N_1,
\]
and conditions~\eqref{eqCondition2Geometry} and~\eqref{eqCondition3Geometry} imply that
\[
  f(t,x) = N_0 + \Sigma + u_1^0(t,x) N_1, \ \ \ a(t,x) = N_0 + u_1^0(t,x) N_1,
\]
where $\Sigma$ is given by the formula~\eqref{eqDefSigma}:
\[
  \Sigma = \frac{1}{2} \, \frac{\partial \sigma(t,x)}{\partial x} \, \sigma(t,x).
\]

Therefore, a system whose trajectories in three-dimensional space belong to a generalized cylinder is described by equations~\eqref{eqIto} or~\eqref{eqStr} with coefficients given above. The generalized cylinder is defined by initial conditions $X_1(0) = x_{10}$ and $X_2(0) = x_{20}$ ($t \geqslant 0$). Next, we consider three cases of initial conditions:
\[
  x_{10} = 1, \ \ \ x_{20} = 1; \ \ \ x_{10} = 1, \ \ \ x_{20} = -1; \ \ \ x_{10} = 0, \ \ \ x_{20} = \sqrt{2}.
\]
For all the cases, we have
\[
  X_1^2(t) + X_2^2(t) + \cos 2t = 3.
\]

Let $u_1^0(t,x) = 1/10$ and $u_1^1(t,x) = 1/5$. Then
\begin{align*}
  \sigma(t,x) & = [ \, x_2/5 \ \ \ {-1/10} \, ]^\trans, \\
  \Sigma(t,x) & = [ \, {-1/25} \ \ \ 0 \, ]^\trans, \\
  f(t,x) & = [ \, 2\sin 2t + x_2/5 - 1/25 \ \ \ {-1/10} \, ]^\trans, \\
  a(t,x) & = [ \, 2\sin 2t + x_2/5 \ \ \ {-1/10} \, ]^\trans.
\end{align*}

In this example, the equation~\eqref{eqIto} is written as follows:
\[
  d \left[ \begin{array}{c} X_1(t) \\ X_2(t) \end{array} \right] =
    \left[ \begin{array}{c} 2\sin 2t + X_2(t) / 5 - 1/25 \\ {-1/10} \end{array} \right] dt +
    \left[ \begin{array}{c} X_2(t) / 5 \\ {-1/10} \end{array} \right] dW(t),
\]
and the corresponding equation~\eqref{eqStr} has the form
\[
  d \left[ \begin{array}{c} X_1(t) \\ X_2(t) \end{array} \right] =
    \left[ \begin{array}{c} 2\sin 2t + X_2(t) / 5 \\ {-1/10} \end{array} \right] dt +
    \left[ \begin{array}{c} X_2(t) / 5 \\ {-1/10} \end{array} \right] \circ dW(t),
\]
where $W(t)$ is the standard Wiener process.

Trajectories of the solution to these stochastic differential equations almost surely belong to a manifold in $\mathds{T} \times \mathds{R}^2$.

Here, we will also use a numerical method to simulate trajectories. Let $t \in \mathds{T} = [0,6.28]$, i.e., $t_0 = 0$ and $T = 6.28 \approx 2\pi$. We will apply the Artemiev method~\cite{ArtAve_97} (the Rosenbrock-type method) with the numerical integration step $h = 0.01$ and initial conditions specified earlier.

Let $\{t_k\}$ be a partition of the time interval $[0,6.28]$ with the given step $h$, i.e.,
\[
  t_{k+1} = t_k + h, \ \ \ k = 0,1,\dots,N-1, \ \ \ t_N = 6.28, \ \ \ N = \frac{6.28}{h},
\]
then for the Artemiev method, we have
\begin{align*}
  Y_{k+1} & = Y_k + \biggl[ E - \frac{h}{2} \, \frac{\partial a(t_k,Y_k)}{\partial x} \biggr]^{-1} \\
  & \ \ \ {} \times \biggr[h a(t_k,Y_k) + \sqrt{h} \sigma(t_k,Y_k) \xi_k + \frac{h}{2} \, \frac{\partial \sigma(t_k,Y_k)}{\partial x} \, \sigma(t_k,Y_k) \xi_k^2 \biggr], \ \ \ Y_0 = x_0,
\end{align*}
where $E$ is the identity matrix of size $2 \times 2$.

As in Example~\ref{ex1}, $\xi_k$ are independent random variables with a standard normal distribution, and $\{Y_k\}$ is the discrete approximation of the random process $X(t)$ (the vector $Y_k$ corresponds to time $t_k$). Three sample trajectories of the numerical solution approximating the random process $X(t)$ are depicted in Figure~\ref{picEx2tr}. In the horizontal plane, we have axes for $t$ (left) and $x_2$ (right), and the vertical axis is for $x_1$.

\begin{figure}[ht]
  \begin{center}
  \ifnum \showfigures = 1
  \includegraphics[scale = 0.8]{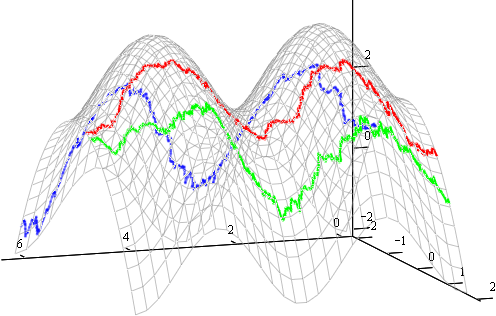}
  \fi
  \end{center}
  \caption{Sample trajectories of the numerical solution corresponding to the random process $X(t)$.}
  \label{picEx2tr}
\end{figure}

The Artemiev method has first order of strong and weak convergence~\cite{ArtAve_97, AveRyb_SJVM24}, i.e., the error in the numerical solution satisfies the estimate similar to one presented in Example~\ref{ex1}:
\[
  \varepsilon = \mathrm{E} |M \bigl( 6.28,X(6.28) \bigr) - M(6.28,Y_N)| = \mathrm{E} |M(0,x_0) - M(6.28,Y_N)| \leqslant c h.
\]

When simulating 1000 sample trajectories of the numerical solution and averaging, the following estimates of the error are obtained: $\varepsilon = 4.040 \cdot 10^{-4}$ for $h = 0.01$, $\varepsilon = 4.046 \cdot 10^{-5}$ for $h = 0.001$, $\varepsilon = 4.070 \cdot 10^{-6}$ for $h = 0.0001$. The proportional decrease in the error with decreasing the numerical integration step corresponds to the first-order convergence.
\end{Example}

\begin{Example}\label{ex3}
This example addresses the invariant stochastic differential system whose state belongs to the sphere
\[
  \frac{x_1^2 + x_2^2 + x_3^2}{2} = C,
\]
i.e., the first integral is $M(t,x) = M(x) = (x_1^2 + x_2^2 + x_3^2)/2$.

Here,
\[
  n = 3, \ \ \ X(t) = [ \, X_1(t) ~ X_2(t) ~ X_3(t) \, ]^\trans, \ \ \ G = \nabla_x M(t,x) = [ \, x_1 \ \ x_2 \ \ x_3 \, ]^\trans.
\]

We use the formula~\eqref{eqBasisX} to define two linearly independent vectors (the condition of linear independence is $x_2 \neq 0$) that are orthogonal to the gradient:
\[
  N_1 = [ \, x_2 \ \ {-x_1} \ \ 0 \, ]^\trans, \ \ \ N_2 = [ \, 0 \ \ x_3 \ \ {-x_2} \, ]^\trans.
\]

The vector $N_0$ is equal to zero due to $M(t,x) = M(x)$ (see Remark~\ref{rem4}).

By analogy with Examples~\ref{ex1} and~\ref{ex2}, we assume $s = 1$ (a scalar Wiener process is sufficient). According to conditions~\eqref{eqCondition1Geometry},~\eqref{eqCondition2Geometry}, and~\eqref{eqCondition3Geometry}, we obtain
\begin{gather*}
  \sigma(t,x) = u_1^1(t,x) N_1 + u_2^1(t,x) N_2, \\
  f(t,x) = \Sigma + u_1^0(t,x) N_1 + u_2^0(t,x) N_2, \ \ \ a(t,x) = u_1^0(t,x) N_1 + u_2^0(t,x) N_2,
\end{gather*}
where $\Sigma$ is given by the formula~\eqref{eqDefSigma}:
\[
  \Sigma = \frac{1}{2} \, \frac{\partial \sigma(t,x)}{\partial x} \, \sigma(t,x).
\]

These functions define coefficients of equations~\eqref{eqIto} or~\eqref{eqStr}, which specify a system with trajectories in four-dimensional space belonging to a hypercylinder over a sphere. The hypercylinder is defined by initial conditions $X_1(0) = x_{10}$, $X_2(0) = x_{20}$, and $X_3(0) = x_{30}$ ($t \geqslant 0$).

Let $u_1^0(t,x) = 0$, $u_2^0(t,x) = 0$, $u_1^1(t,x) = 1$, and $u_2^1(t,x) = -1$. Then
\begin{align*}
  \sigma(t,x) & = [ \, x_2 \ \ \ {-x_1 - x_3} \ \ \ x_2 \, ]^\trans, \\
  \Sigma(t,x) & = [ \, (-x_1 - x_3)/2 \ \ \ {-x_2} \ \ \ (-x_1 - x_3)/2 \, ]^\trans, \\
  f(t,x) & = [ \, (-x_1 - x_3)/2 \ \ \ {-x_2} \ \ \ (-x_1 - x_3)/2 \, ]^\trans, \\
  a(t,x) & = [ \, 0 \ \ \ 0 \ \ \ 0 \, ]^\trans.
\end{align*}

In this example, equations~\eqref{eqIto} and~\eqref{eqStr} are linear, and they can be represented as follows:
\[
  dX(t) = F X(t) dt + S X(t) dW(t), \ \ \ dX(t) = S X(t) \circ dW(t),
\]
where
\[
  F = \left[ \begin{array}{ccc}
    -\tfrac{1}{2} & 0 & -\tfrac{1}{2} \\
    0 & -1 & 0 \\
    -\tfrac{1}{2} & 0 & -\tfrac{1}{2} \\
  \end{array} \right], \ \ \
  S = \left[ \begin{array}{ccc}
    0 & 1 & 0 \\
    -1 & 0 & -1 \\
    0 & 1 & 0 \\
  \end{array} \right],
\]
and $W(t)$ denotes a standard Wiener process.

Trajectories of the solution to these stochastic differential equations almost surely belong to a three-dimensional manifold in $\mathds{T} \times \mathds{R}^3$ whose projection onto phase space $\mathds{R}^3$ is a sphere centered at the origin and with radius $\sqrt{x_{10}^2 + x_{20}^2 + x_{30}^2}$.

The analytical solution can be expressed through the matrix $S$~\cite{KloPla_92, ArtAve_97}:
\[
  X(t) = \exp SW(t) \, x_0 = \left[
    \begin{array}{ccc}
      \frac{1}{2} (1 + \cos \sqrt{2}W(t)) & \frac{\sqrt{2}}{2} \sin \sqrt{2}W(t) & \frac{1}{2} (\cos \sqrt{2}W(t) - 1) \\
      -\frac{\sqrt{2}}{2} \sin \sqrt{2}W(t) & \cos \sqrt{2}W(t) & -\frac{\sqrt{2}}{2} \sin \sqrt{2}W(t) \\
      \frac{1}{2} (\cos \sqrt{2}W(t) - 1) & \frac{\sqrt{2}}{2} \sin \sqrt{2}W(t) & \frac{1}{2} (1 + \cos \sqrt{2}W(t)) \\
    \end{array}
  \right] x_0,
\]
where $x_0 = [ \, x_{10} \ \ x_{20} \ \ x_{30} \, ]^\trans$. It is easy to verify that the matrix $\exp S W(t)$ define an orthogonal linear transformation in $\mathds{R}^3$ ($\det \exp S W(t) = 1$), so $|X(t)| = |x_0|$.

For visualization, we assume $t \in \mathds{T} = [0,5]$, i.e., $t_0 = 0$ and $T = 5$, and define a partition $\{t_k\}$ with the step $h = 0.01$ for the time interval $[0,5]$:
\[
  t_{k+1} = t_k + h, \ \ \ k = 0,1,\dots,N-1, \ \ \ t_N = 5, \ \ \ N = \frac{5}{h};
\]
we apply the following rule for modeling trajectories of the Wiener process:
\[
  W(0) = 0, \ \ \ W(t_{k+1}) = W(t_k) + \sqrt{h} \, \xi_k, \ \ \ k = 0,1,\dots,N-1,
\]
where random variables $\xi_k$ are independent and have a standard normal distribution.

Figure~\ref{picEx3Atr} shows three sample trajectories of the solution (the discrete approximation: $Y_k = X(t_k) = \exp SW(t_k) \, x_0$; graphs are ordered according to state components). Figure~\ref{picEx3Btr} contains three sample phase trajectories of the solution, where we use axes for $x_1$ (left) and $x_2$ (right) in the horizontal plane, and the vertical axis corresponds $x_3$. The trajectories are obtained with the initial conditions $x_{10} = 0$ and $x_{20} = x_{30} = 1$.

\begin{figure}[ht]
  \begin{center}
  \ifnum \showfigures = 1
  \includegraphics[scale = 0.8]{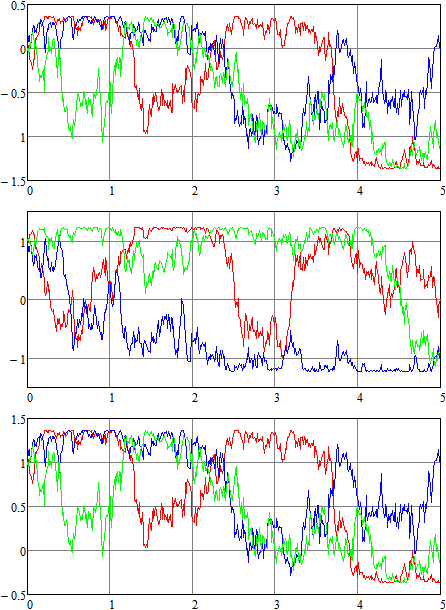}
  \fi
  \end{center}
  \caption{Sample trajectories of the solution $X(t)$: $X_1(t)$ (top), $X_2(t)$ (middle), $X_3(t)$ (bottom).}
  \label{picEx3Atr}
\end{figure}

\begin{figure}[ht]
  \begin{center}
  \ifnum \showfigures = 1
  \includegraphics[scale = 0.8]{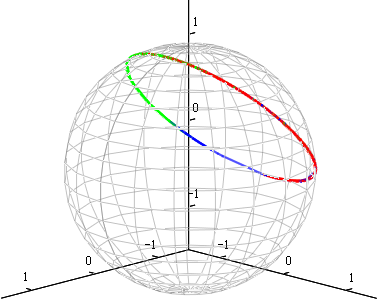}
  \fi
  \end{center}
  \caption{Sample phase trajectories corresponding to the random process $X(t)$.}
  \label{picEx3Btr}
\end{figure}

For more complex behavior of phase trajectories, it is necessary to specify nonzero functions $u_1^0(t,x)$ and $u_2^0(t,x)$ or choose $s > 1$, i.e., use a vector Wiener process. However, in this case, difficulties arise in obtaining an analytical solution to corresponding stochastic differential equations.
\end{Example}

\section{Conclusions}\label{secConcl}

This study presents the solution to the inverse dynamics problem, specifically the method for forming invariant stochastic differential systems associated with a given first integral. The main advantages of this method are as follows:

(1)\;The method provides simple expressions for coefficients of stochastic differential equations.

(2)\;The method ensures a minimum number of functions required to determine the entire set of invariant stochastic differential systems associated with a given first integral (coefficients of equations depend on these functions linearly).

(3)\;The method allows one to obtain stochastic differential equations with a degenerate diffusion matrix relative to a part of the state components.

In addition to the theoretical results, the article includes numerical simulations of three invariant stochastic differential systems. For the first system (a third-order system), the state belongs to a catenoid. For the second system (a second-order system), the state belongs to a time-dependent parabola (a dynamic manifold). For the third system (a third-order system), the state belongs to a sphere.

For numerical simulations, the Milstein method~\cite{MilTre_04} and the Artemiev method~\cite{ArtAve_97}, which have first-order convergence, are used in the first two examples. The numerical solutions do not belong to the specified manifolds due to the error in the numerical methods; however, this error is small and fully corresponds to the indicated order of convergence. The third example uses an analytical solution.

The described method can be applied to derive invariant deterministic differential systems. It can be utilized to form more complex invariant stochastic differential systems. They are characterized by stochastic differential equations that include Wiener and Poisson components~\cite{Kar_15, Kar_21}, as well as those with variable or random right-hand sides~\cite{AveRyb_STAB18}.

\end{document}